

\documentclass[12pt,a4paper,english]{amsart}

\usepackage[utf8]{inputenc}
\usepackage[T1]{fontenc}
\usepackage[english]{babel}
\usepackage{lmodern}
\usepackage{times}
\sloppy
\jot3mm

\usepackage{graphicx}
\usepackage{latexsym}
\usepackage{dsfont}
\usepackage{amssymb,amsmath,amsfonts,amsthm}
\usepackage{array}
\usepackage[autostyle=true]{csquotes}
\usepackage{tikz}
\usetikzlibrary{matrix,arrows}

\newtheorem{Satz}{Satz}

\newtheorem{Lemma}[Satz]{Lemma}

\newtheorem{Theorem}[Satz]{Theorem}

\newtheorem*{Lemma*}{Lemma}

\theoremstyle{definition}

\newcommand{\R}{\mathbb{R}}

\newcommand{\N}{\mathbb{N}}
\newcommand{\T}{\mathbb{T}}

\newcommand{\cK}{\mathcal{K}}
\newcommand{\cB}{\mathcal{B}}

\newcommand{\cH}{\mathcal{H}}

\newcommand{\cP}{\mathcal{P}}
\newcommand{\cF}{\mathcal{F}}

\newcommand{\intd}{\mathrm{d}}

\newcommand{\1}{\mathds{1}}

\DeclareMathOperator{\GOp}{G}
\DeclareMathOperator{\AOp}{A}

\renewcommand{\S}{\mathbb S^{n-1}}
\newcommand{\Sn}{\mathbb S^{n-1}}

\hyphenation{Min-kow-ski}

\topmargin-0.5cm
\oddsidemargin 0.2cm
\evensidemargin 0.2cm
\headheight1cm
\headsep1cm
\textheight22.7cm
\textwidth15.5cm

\newcommand{\MinTen}[3]
  {
    \Phi_{#1}^{#2,#3}
  }

\newcommand{\TenCM}[4]
  {
    \phi_{#1}^{#2,#3,#4}
  }

\newcommand{\IVol}[3]
  {
    V_{#1}
  }

\begin{document}

  \title{Integral geometric formulae for Minkowski tensors}
  \date{\today}

  \author{Daniel Hug \and Jan A. Weis}
  \address{Karlsruhe Institute of Technology (KIT), Department of Mathematics, D-76128 Karls\-ruhe, Germany}
  \email{daniel.hug@kit.edu}
  \address{Karlsruhe Institute of Technology (KIT), Department of Mathematics, D-76128 Karls\-ruhe, Germany}
  \email{jan.weis@kit.edu}

  \thanks{The authors were supported in part by DFG grants FOR 1548 and HU 1874/4-2}
  \subjclass[2010]{Primary: 52A20, 53C65; secondary: 52A22, 52A38, 28A75}
  \keywords{Kinematic formula, Crofton formula, tensor valuation, intrinsic volume, Minkowski tensor, integral geometry, convex body, polytope}

\begin{abstract}
    The Minkowski tensors are the natural tensor-valued generalizations of the intrinsic volumes of convex bodies.
    We prove two complete sets of integral geometric formulae, so called kinematic and Crofton formulae, for these Minkowski tensors.
    These formulae express the integral mean of the Minkowski tensors of the intersection of a given convex body with a second geometric object (another convex body in the kinematic case and an affine subspace in the Crofton case) which is uniformly moved by a proper rigid motion, in terms of linear combinations of the Minkowski tensors of the given geometric objects.
\end{abstract}

  \maketitle

  \section{Introduction}

    One of the origins of classical integral geometry is a series of lectures given by Wilhelm Blaschke in Germany, Bulgaria, and Rumania in the 1930s (collected in \cite{Blaschke55}). He initiated investigations on questions in the field of convex and differential geometry, which arise from problems in classical geometric probability, but are nevertheless of geometric interest, independent of their probabilistic applications. Integral geometry basically deals with averaging certain geometric functionals with respect to invariant measures. In particular, intersection formulae are integral geometric key results, which describe integral mean values with respect to invariant measures of specific geometric quantities evaluated at the intersection of a moving geometric object and a fixed geometric object. For a classical approach to this topic see \cite{Santalo71},  recent developments are described in the monographs \cite[Chap. 5]{SchnWeil08} and \cite[Chap. 4.4]{Schneider14}.

    The two best known and most important classical intersection formulae are the \textit{principal kinematic formula} and the \textit{classical Crofton formula}, which treat the intersection of a convex body in Euclidean space with another geometric object (in the kinematic formula this is another convex body, in the Crofton formula this is an affine subspace) which is uniformly moved by a proper rigid motion. More precisely, denoting the space of convex bodies (nonempty, compact, convex sets) in Euclidean space $\R^{n}$ by $\cK^{n}$, the principal kinematic formula (see \cite[(4.52)]{Schneider14}) for the {\em intrinsic volumes} $V_i$, $i\in \{0,\ldots,n\}$, states that, for any two convex bodies $K, K' \in \cK^{n}$ and $j \in \{ 0, \ldots, n \}$,
    \begin{align} \label{Form_Princ_KF}
      \int_{\GOp_n} V_{j} (K \cap g K') \, \mu( \intd g) = \sum_{k = j}^{n} \alpha_{n, j, k} V_{k}(K) V_{n - k + j}(K'),
    \end{align}
    where $\GOp_{n}$ denotes the group of proper rigid motions of $\R^{n}$, $\mu$ is the motion invariant Haar measure on $\GOp_{n}$, normalized in the usual way (see \cite[p. 586]{SchnWeil08}), and the constant
    \begin{align*}
      \alpha_{n, j, k} :=
        \frac{\Gamma \left( \frac{k + 1} {2} \right) \Gamma \left( \frac{n - k + j + 1} {2} \right)} {\Gamma \left( \frac{j + 1} {2} \right) \Gamma \left( \frac{n + 1} {2} \right)}
    \end{align*}
    is expressed in terms of specific values of the Gamma function.
    Furthermore, the classical Crofton formula (see \cite[(4.59)]{Schneider14}) for the intrinsic volumes states that, for a convex body $K \in \cK^{n}$, $k \in \{ 0, \ldots, n \}$, and $j \in \{ 0, \ldots, k \}$,
    \begin{equation} \label{Form_Croft_Classic}
      \int_{\AOp(n, k)} V_{j}
      (K \cap E) \, \mu_{k} (\intd E) = \alpha_{n, j, k} V_{n - k + j}
      (K),
    \end{equation}
    where $\AOp(n, k)$ is the affine Grassmannian of $k$-flats in $\R^{n}$, $\mu_{k}$ denotes the motion invariant Haar measure on $\AOp(n, k)$, normalized as in \cite[p. 588]{SchnWeil08}, and $\alpha_{n,j,k}$ is the same constant as in the principal kinematic formula \eqref{Form_Princ_KF}.

    The functionals $V_{j}: \cK^{n} \rightarrow \R$, $j \in \{ 0, \ldots, n \}$, appearing in \eqref{Form_Princ_KF} and \eqref{Form_Croft_Classic}, are the {intrinsic volumes}, which are the uniquely determined coefficients of the monomials in the \textit{Steiner formula}
    \begin{equation} \label{Form_Steiner}
      \mathcal{H}^n (K + \varepsilon B^{n}) = \sum_{j = 0}^{n} \kappa_{n - j} V_{j} (K) \varepsilon^{n - j},\qquad \varepsilon \geq 0,
    \end{equation}
    which holds for all convex bodies $K \in \cK^{n}$. As usual in this context, $+$ denotes the Minkowski addition in $\R^{n}$,  $B^{n}$ is the Euclidean unit ball in $\R^{n}$ of $n$-dimensional volume $\kappa_{n}$, and $\mathcal{H}^n$ is the $n$-dimensional Hausdorff measure.
    Properties of the intrinsic volumes such as continuity, isometry invariance, additivity (valuation property) and homogeneity are derived from corresponding properties of the volume functional. It is well known that the intrinsic volumes can be uniquely extended by additivity to finite
    unions of convex bodies (polyconvex sets). As an immediate consequence, all integral geometric results in this contribution  hold more generally for polyconvex sets.
    A key result for the intrinsic volumes is \textit{Hadwiger's characterization theorem}, which states that $V_{0}, \ldots, V_{n}$ form a basis of the vector space of continuous and isometry invariant real-valued valuations on $\cK^{n}$ (see \cite[Theorem~6.4.14]{Schneider14}).
    This theorem can be used to derive not only \eqref{Form_Princ_KF} and \eqref{Form_Croft_Classic}, but also Hadwiger's general integral geometric theorem (see \cite[Theorem~5.1.2]{SchnWeil08}).

    In the early 1970s, Hadwiger \& Schneider (see \cite{HadSchn71}) and Schneider (see \cite{Schneider72, Schneider72a}) introduced a vector-valued generalization of the intrinsic volumes (which are also called \textit{quermassintegrals}), the so called \textit{quermassvectors} or \textit{curvature centroids}.
    In fact, they proved a characterization theorem, similar to the aforementioned theorem by Hadwiger, for vector-valued valuations.
    Furthermore, they established kinematic and Crofton formulae for the quermassvectors by an application of their characterization theorem.
    More recently,  McMullen (1997) made one step further and initiated a study of tensor-valued generalizations of the intrinsic volumes (see \cite{McMullen97}).
    In the same contribution, he suggested as the \enquote{ultimate aim} to find a description of the vector space of continuous and isometry covariant tensor-valued (of a fixed rank) valuations on $\cK^{n}$, in the spirit of Hadwiger's paradigmatic characterization theorem in the scalar-valued case.
    Only two years later, Alesker showed that this vector space is spanned by the so-called \textit{Minkowski tensors} (see \cite{Alesker99a, Alesker99b}), which are natural tensor-valued generalization of the intrinsic volumes.
    In contrast to the intrinsic volumes in the scalar case and to the quermassvectors in the rank one case, the Minkowski tensors of rank greater than one satisfy nontrivial linear relationships, which were already discovered by McMullen (see \cite[Theorem 5.3]{McMullen97}). In particular, they do not form a basis of the vector space they span. Thereafter, Hug, Schneider \& Schuster proved that these linear relationships are essentially the only ones, and thus they explicitly determined  the dimension of the corresponding vector space (see \cite{HugSchnSchu08b}).

    Alesker's characterization result for the Minkowski tensors is naturally connected to  integral geometric formulae for the Minkowski tensors.  In fact, Alesker's characterization theorem already implies that the kinematic and Crofton integrals for Minkowski tensors can again be expressed in terms of Minkowski tensors.
    However, the existence of linear dependences makes it substantially harder (if possible at all) to determine these intersection formulae explicitly, only on the basis of a global characterization result (as it can be done conveniently in the scalar and in the rank one case). By a substantially different approach,
    Hug, Schneider \& Schuster obtained a complete set of Crofton formulae for the Minkowski tensors (see \cite[Theorems~2.1--2.6]{HugSchnSchu08a}).
    An annoying  drawback of the explicit form of these formulae was the complicated (though explicit) form of the coefficients required to express the Crofton integral as a linear combination of Minkowski tensors.
    By now there exist several further approaches (by different authors) to integral formulae for Minkowski tensors (for instance, see \cite{BernHug15} for translation invariant Minkowski tensors, \cite{KousKideHug15, HugWeis16a} for intrinsically defined Minkowski tensors, and the literature cited there).

    The aim of the present article is to state and prove for the first time a complete set of kinematic and Crofton formulae for the Minkowski tensors. Our approach is  based on the recent kinematic and Crofton formulae for tensorial curvature measures, derived in \cite{HugWeis16b, HugWeis16c}, in combination with a relation going back to McMullen (see \cite{McMullen97}), which had been used before.
    Surprisingly, the formulae we will derive via globalization of the tensorial curvature measures (which can be viewed as local versions of the Minkowski tensors) now involve concise and structurally simple coefficients also in the case of the Crofton integrals.

    During the last  two decades, Minkowski tensors have been discussed and applied in the literature related to physics and other natural sciences. In these contexts, disordered spatial structures are explored, the properties of which are determined by their intrinsic geometry. Minkowski tensors turned out to be a perfect and versatile tool for analyzing and quantitatively measuring  such structures (see the surveys \cite{Mecke00, SchrEtAl10, SchrEtAl11, KideVede17} for an extensive overview and the PhD thesis \cite{Klatt16} for various detailed investigations).
    An exemplary  list of applications includes nuclear physics \cite{SchuEtAl15}, granular matter \cite{KapfEtAl12, XiaEtAl14, SchaEtAl15, KuhnSunWang15}, density functional theory \cite{WittMareMeck14}, physics of complex plasmas \cite{BoebRaet16}, and physics of materials science \cite{SaadEtAl12}.
    These and many other applications rest upon various characterization and classification theorems for tensor valuations, uniqueness and reconstruction results \cite{HoerKous16, KousKide16, Kous16, Kous16a}, accompanied by numerical algorithms \cite{SchrEtAl10, SchrEtAl11, HugKideSvan16, ChriKide16}, stereological estimation procedures \cite{KousKideHug15, KousEtAl16}, and integral geometric formulae \cite{HugSchnSchu08a, HugWeis16a, SvanJens16}.

    The paper is structured as follows.
    In Section \ref{secPre}, we fix our notation and define the Minkowski tensors.
    In Sections \ref{Sec_KF_MT} and \ref{Sec_CF_MT}, we state the kinematic and the Crofton formulae. For the proof our integral geometric formulae, we recall in Section \ref{secprep} (an iteration of) a connexion between Minkowski tensors and certain total generalized tensorial curvature measures from McMullen \cite[p.~269]{McMullen97}. The main proofs are provided in Section \ref{Sec_Proofs}. In each case, we first deal with the case of translation invariant Minkowski tensors.
    As the arguments heavily rely on the corresponding formulae for tensorial curvature measures, we recall the definition of these measure-valued valuations and state the required results in a form adjusted to the current application, in an appendix.

  \section{Preliminaries}\label{secPre}

    We work in the $n$-dimensional Euclidean space $\R^{n}$, equipped with its usual topology generated
    by the standard scalar product $\langle \cdot\,, \cdot \rangle$ and the corresponding Euclidean norm
    $\| \cdot \|$.
    For a topological space $X$, we denote the Borel $\sigma$-algebra on $X$ by $\cB(X)$.

    We recall from the introduction that $\GOp_{n}$ denotes the group of proper rigid motions of $\R^{n}$, and $\mu$ is the motion invariant Haar measure on $\GOp_{n}$, normalized in the usual way (see \cite[p. 586]{SchnWeil08}).
    For $k \in \{0, \ldots, n\}$, we denote the affine Grassmannian of $k$-dimensional  affine subspaces of $\R^{n}$ by $\AOp(n, k)$.
    We write  $\mu_{k}$ for the motion invariant Haar measure on $\AOp(n, k)$, normalized as in \cite[p. 588]{SchnWeil08}.
    The directional space of an affine $k$-flat $E \in \AOp(n, k)$ is denoted by $E^{0} \in \GOp(n, k)$ and its orthogonal complement by $E^{\perp} \in \GOp(n, n - k)$.

    The algebra of symmetric tensors over $\R^n$ is denoted by $\T$ (the underlying $\R^n$ will be clear from the context), the vector space of symmetric tensors of rank $p\in\N_0$ is denoted by $\T^p$ with $\T^0=\R$.
    The symmetric tensor product of two symmetric tensors $T_1, T_2$ over $\R^{n}$ is denoted by $T_1T_2$, and the $p$-fold symmetric tensor product of a symmetric tensor $T$ by $T^{p}$, $p\in\N_0$, where $T^0:=1$.
    Identifying $\R^{n}$ with its dual space via its scalar product, we consider a symmetric
    tensor of rank $p$ as a symmetric $p$-linear map from $(\R^{n})^{p}$ to $\R$.
    A special tensor is the \emph{metric tensor} $Q \in \T^{2}$, defined by $Q(x, y) := \langle x, y \rangle$ for $x, y \in \R^{n}$.
    For an affine $k$-flat $E\subset\R^n $, $k \in \{0,\ldots, n\}$, the metric tensor $Q(E)$ associated with $E$ is defined by $Q(E)(x, y) := \langle x_{E^{0}}, y_{E^{0}} \rangle$ for $x, y \in \R^{n}$, where $x_{E^{0}}$ denotes the orthogonal projection of $x$ to $E^0$.

    In order to define the Minkowski tensors and to explain how they can be epxressed in terms of the support measures,
    we start with some preparations.
    For a convex body $K \in \cK^{n}$ and $x \in \R^{n}$, we denote the metric projection of $x$ onto $K$ by $p(K, x)$ and define $u(K, x) := (x - p(K, x)) / \| x - p(K, x) \|$ for $x \in \R^{n} \setminus K$.
    For $\varepsilon > 0$ and a Borel set $\eta \subset \Sigma^{n}:=\R^{n} \times \Sn$,
    \begin{equation*}
      M_{\varepsilon}(K, \eta) := \left\{ x \in \left( K + \varepsilon B^{n} \right)
      \setminus K \colon \left( p(K, x), u(K, x) \right) \in \eta \right\}
    \end{equation*}
    is a local parallel set of $K$ which satisfies the \emph{local Steiner formula}
    \begin{equation} \label{14-Form_Steiner_loc}
      \mathcal{H}^n (M_{\varepsilon}(K, \eta)) = \sum_{j = 0}^{n - 1} \kappa_{n - j} \Lambda_{j} (K, \eta)
        \varepsilon^{n - j}, \qquad \varepsilon \geq 0.
    \end{equation}
    This relation determines the \emph{support measures} $\Lambda_{0} (K, \cdot), \ldots, \Lambda_{n - 1} (K, \cdot)$ of $K$, which are finite Borel measures on $\cB (\Sigma^{n})$.
    Obviously, a comparison of \eqref{14-Form_Steiner_loc} and the Steiner formula yields $V_{j}(K) = \Lambda_{j} (K, \Sigma^{n})$.
    For further information see \cite[Chap.~4.2]{Schneider14}.

    The \textit{Minkowski tensors} can be seen as the tensor-valued generalizations of the intrinsic volumes, obtained by extending the relation between the intrinsic volumes and the support measures to tensorial functions and measures. Hence, for a convex body $K \in \cK^{n}$ and $j, r, s \in \N_{0}$, we define the Minkowski tensor
    \begin{equation*}
      \MinTen{j}{r}{s} (K) := \frac{1}{r! s!} \frac{\omega_{n - j}}{\omega_{n - j + s}} \int _{\R^{n} \times \Sn} x^r u^s \, \Lambda_j(K, \intd (x, u)),
    \end{equation*}
    for $j \in \{ 0, \ldots, n - 1 \}$, where $\omega_{n}$ denotes the $(n - 1)$-dimensional volume of $\Sn$, and by
    \begin{equation*}
      \MinTen{n}{r}{0} (K): =\Phi_n^{r}(K):= \frac{1}{r!} \int _{K} x^r \, \cH^{n}(\intd x).
    \end{equation*}
    For the sake of convenience, we extend these definitions by $\MinTen{j}{r}{s} := 0$ for $j \notin \{ 0, \ldots, n \}$ or $r \notin \N_{0}$ or $s \notin \N_{0}$  or for  $j = n$ and $s \neq 0$.

    For a polytope, there is an alternative representation of the support measures, which also yields an explicit description for the Minkowski tensors.
    Let $\cP^n\subset\cK^n$ denote the space of convex polytopes in $\R^n$.
    For a polytope $P \in \cP^{n}$ and $j \in \{ 0, \ldots, n \}$, we
    denote the set of $j$-dimensional faces of $P$ by $\cF_{j}(P)$ and
    the normal cone of $P$ at a face $F \in \cF_{j}(P)$ by $N(P,F)$.
    Then, the $j$th support measure $\Lambda_{j} (P, \cdot)$ of $P$ is explicitly given by
    \begin{equation*}
      \Lambda_{j} (P, \eta) = \frac {1} {\omega_{n - j}}
      \sum_{F \in \cF_{j}(P)} \int _{F} \int _{N(P,F) \cap \Sn}
      \1_\eta(x,u)
      \, \cH^{n - j - 1} (\intd u) \, \cH^{j} (\intd x)
    \end{equation*}
    for $\eta \in \cB(\Sigma^{n})$ and $j \in \{ 0, \ldots, n - 1 \}$,
		where $\cH^{j}$ denotes the $j$-dimensional Hausdorff measure.
    In the same spirit, the Minkowski tensor of $P$ is given by
    \begin{align} \label{Form_MT_Pol}
      \MinTen{j}{r}{s} (P) =  \frac{1}{r! s!} \frac{1}{\omega_{n - j + s}}
			\sum_{F \in \cF_{j}(P)} \int _{F} x^r \, \cH^{j}(\intd x)
			\int _{N(P,F) \cap \Sn} u^{s} \, \cH^{n - j - 1} (\intd u),
    \end{align}
    for $j \in\{0, \ldots, n - 1\}$.%

  \section{Kinematic formulae for Minkowski tensors} \label{Sec_KF_MT}

    In this section, we state the complete set of kinematic formulae for the Minkowski tensors of convex bodies. In other words, for $K, K' \in \cK^{n}$ we express the integral mean value
    \begin{align*}
      \int_{\GOp_{n}} \MinTen{j}{r}{s} (K \cap g K') \, \mu( \intd g)
    \end{align*}
    in terms of certain Minkowski tensors of $K$ and $K'$. In fact, only some of the Minkowski tensors of $K$ are required and, in particular, only scalar Minkowski tensors (that is, intrinsic volumes) of $K'$.

    We proceed in two steps. First, we state and prove the formulae for the translation invariant Minkowski tensors, then we turn to the formulae for general Minkowski tensors. It is natural to consider the translation invariant Minkowski tensors separately, since the involved Minkowski tensors are linearly independent in this case and the result has a simpler form.
    The proof is basically an application of the kinematic formulae for the  tensorial curvature measures (obtained in \cite{HugWeis16b}), combined with conclusion \eqref{Form_McM_MT_gtCM} after Lemma \ref{Lem_McM_MT_gtCM}.

  \subsection{Translation invariant Minkowski tensors}

      As explained before, we start by stating the kinematic formula for translation invariant Minkowski tensors $\MinTen{j}{0}{s}$, $j, s \in \N_{0}$ with $j \leq n$, where $s = 0$ if $j = n$.
      Here, the proof and the representation of the kinematic integrals are simpler than in the general case, since  several coefficients of the occurring Minkowski tensors can be combined in a suitable way.

      \begin{Theorem} \label{Thm_KF_tiMT}
        Let $K, K' \in \cK^n$ and $j, s \in \N_{0}$ with $j \leq n$, where $s = 0$ if $j = n$. Then
        \begin{align*}
          & \int_{\GOp_{n}} \MinTen{j}{0}{s} (K \cap g K') \, \mu( \intd g) = \sum_{k = j}^{n} \sum_{m = 0}^{\lfloor \frac s 2 \rfloor} e_{n, j, k}^{s, m, 0} \, Q^{m} \MinTen{k}{0}{s - 2m} (K) \, \IVol{n - k + j}{0}{0} (K'),
        \end{align*}
        where, for $m = 0, \ldots, \lfloor \frac s 2 \rfloor - 1$,
        \begin{align*}
           e_{n, j, k}^{s, m, 0} : = \frac{1}{(4\pi)^{m} m!} \frac {\Gamma(\frac {k} 2)} {\Gamma(\frac {j} 2)} \frac {\Gamma(\frac {j + s} 2 - m)} {\Gamma(\frac {k + s} 2)} \frac {\Gamma(\frac {k - j} 2 + m)} {\Gamma(\frac{k - j}{2})} \alpha_{n, j, k},
        \end{align*}
        with $\alpha_{n, j, k}$ as in \eqref{Form_Princ_KF}, and
        \begin{align*}
          e_{n, j, k}^{s, \lfloor \frac s 2 \rfloor, 0} :=
            \frac{\frac{k} 2 + \lfloor \frac s 2 \rfloor}{(4\pi)^{\lfloor \frac s 2 \rfloor} \lfloor \frac s 2 \rfloor!}
            \frac {\Gamma(\frac {k} 2)} {\Gamma(\frac {j} 2 + 1)}
            \frac {\Gamma(\frac {j + s} 2 - \lfloor \frac s 2 \rfloor + 1)} {\Gamma(\frac {k + s} 2 + 1)}
            \frac {\Gamma(\frac {k - j} 2 + \lfloor \frac s 2 \rfloor)} {\Gamma(\frac{k - j}{2})} \alpha_{n, j, k},
        \end{align*}
        except for the case  $k=j=0$ where $e_{n, 0, 0}^{s, m, 0}:=\1\{m=0\}$ for $m=0,\ldots,\lfloor\frac{s}{2}\rfloor$.
      \end{Theorem}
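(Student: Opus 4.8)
The plan is to deduce Theorem \ref{Thm_KF_tiMT} from the kinematic formulae for (generalized) tensorial curvature measures obtained in \cite{HugWeis16b} and recalled in the appendix, the bridge between the two being the iterated McMullen-type identity \eqref{Form_McM_MT_gtCM} following Lemma \ref{Lem_McM_MT_gtCM}. In a first step I would use \eqref{Form_McM_MT_gtCM} to rewrite, for an arbitrary convex body $M\in\cK^n$, the translation invariant Minkowski tensor $\Phi_j^{0,s}(M)$ as an explicit finite linear combination, with universal coefficients depending only on $n,j,s$, of terms $Q^{m}\phi_{k}^{0,t,l}(M,\Sigma^n)$, i.e.\ of total generalized tensorial curvature measures of $M$ carrying metric-tensor prefactors. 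Applying this with $M=K\cap gK'$, integrating over $g\in\GOp_n$ against $\mu$, and invoking Fubini's theorem — legitimate since the sum is finite and, for fixed $K,K'$, the integrand is a bounded measurable function of $g$ supported on a set of finite $\mu$-measure — reduces the kinematic integral on the left-hand side to a finite linear combination of kinematic integrals $\int_{\GOp_n}\phi_{k}^{0,t,l}(K\cap gK',\Sigma^n)\,\mu(\intd g)$ of total generalized tensorial curvature measures.

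Into each of these I would substitute the kinematic formula for (total, translation invariant) generalized tensorial curvature measures from the appendix: it expresses $\int_{\GOp_n}\phi_{k}^{0,t,l}(K\cap gK',\Sigma^n)\,\mu(\intd g)$ as a linear combination of terms of the form (explicit Gamma-function coefficient)$\,\cdot\,Q^{m'}\phi_{k'}^{0,t',l'}(K,\Sigma^n)\,\cdot\,V_{n-k'+j}(K')$, in which only scalar intrinsic volumes of $K'$ enter — this is precisely the structural feature responsible for the shape of the asserted formula. It then remains to undo the globalization on the $K$-side: applying \eqref{Form_McM_MT_gtCM} once more, now read in the converse direction (the iterated relation can be inverted on the finite-dimensional space spanned by the tensors of rank $s$ involved), I re-express every total generalized tensorial curvature measure of $K$ through the genuine Minkowski tensors $Q^{m}\Phi_k^{0,s-2m}(K)$, $m=0,\dots,\lfloor s/2\rfloor$. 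Reordering the resulting finite double sum by the indices $k$ and $m$, and using the convention $\Phi_k^{r,s}=0$ for $k\notin\{0,\dots,n\}$, yields an identity of exactly the asserted form $\sum_{k=j}^{n}\sum_{m=0}^{\lfloor s/2\rfloor}e_{n,j,k}^{s,m,0}\,Q^{m}\Phi_k^{0,s-2m}(K)\,V_{n-k+j}(K')$ — provided the three batches of coefficients assembled along the way combine into the stated closed forms.

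Establishing this last point is the main obstacle: one has to show that the resulting nested sums of products of ratios of Gamma functions collapse to the concise expressions $e_{n,j,k}^{s,m,0}$ given in the statement. This is a lengthy but essentially routine manipulation with Pochhammer symbols, the Legendre duplication and Gauss multiplication formulae, and Vandermonde-type summation identities; the factor $\alpha_{n,j,k}$ is inherited unchanged from the kinematic formula for the curvature measures. The two boundary phenomena in the statement are explained structurally by where the McMullen recursion terminates: the top term $m=\lfloor s/2\rfloor$ carries the slightly heavier coefficient $e_{n,j,k}^{s,\lfloor s/2\rfloor,0}$ because the last admissible metric-tensor power forces the shift in the Gamma arguments and the extra factor $\tfrac k2+\lfloor\tfrac s2\rfloor$, while in the degenerate case $k=j=0$ — where $\Phi_0^{0,s}$ is a constant multiple of a power of $Q$ and only the $m=0$ term survives — the coefficient $e_{n,0,0}^{s,m,0}=\1\{m=0\}$ is read off directly from the definitions. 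Finally, since the translation invariant Minkowski tensors $Q^{m}\Phi_k^{0,s-2m}$ of a fixed rank are linearly independent, the coefficients produced by this procedure are uniquely determined, which provides a convenient internal check on the computation; if \eqref{Form_McM_MT_gtCM} is only available for polytopes, the identity extends to arbitrary $K,K'\in\cK^n$ by weak continuity of the support measures and the density of polytopes in $\cK^n$.
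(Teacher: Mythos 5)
Your overall strategy---reduce to polytopes, feed the kinematic formula for tensorial curvature measures (Theorem \ref{Thm_KF_tCM}) into the problem, translate the generalized tensorial curvature measures into Minkowski tensors via \eqref{Form_McM_MT_gtCM}, collect coefficients, and approximate for general convex bodies---is indeed the paper's strategy. However, your first step is both superfluous and, taken literally, not executable with the tools you have. There is nothing to decompose on the left-hand side: by definition $\MinTen{j}{0}{s}(K\cap gK')=\TenCM{j}{0}{s}{0}(K\cap gK',\R^n)$ is already the total measure of an ordinary ($i=0$) tensorial curvature measure, so Theorem \ref{Thm_KF_tCM} with $\beta=\beta'=\R^n$ applies verbatim to the kinematic integral. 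If you instead genuinely invert \eqref{Form_McM_MT_gtCM} to write $\MinTen{j}{0}{s}(K\cap gK')$ as a combination involving generalized measures $\TenCM{j}{0}{t}{1}(K\cap gK',\R^n)$, you then need a kinematic formula with a \emph{generalized} tensorial curvature measure inside the integral; the appendix supplies no such formula (Theorem \ref{Thm_KF_tCM} has only $i=0$ on the left), so that route rests on an ingredient you have not secured. You also have the two directions of \eqref{Form_McM_MT_gtCM} interchanged: converting the terms $\TenCM{k}{0}{s-2m}{1}(K,\R^n)$ that appear on the \emph{right-hand side} of Theorem \ref{Thm_KF_tCM} into Minkowski tensors is the forward reading of \eqref{Form_McM_MT_gtCM}, not its inversion; since $r=0$, only the summand $p=0$ survives there, giving $\TenCM{k}{0}{s-2m}{1}(K,\R^n)=\tfrac{2\pi}{k}\,Q\,\MinTen{k}{0}{s-2m}(K)-\tfrac{4\pi^2}{k}(s-2m+2)\,\MinTen{k}{0}{s-2m+2}(K)$ and nothing more.

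Once the detour is removed, the coefficient bookkeeping is far lighter than you anticipate: after an index shift in $m$, each coefficient is the three-term combination $c^{s,0,m}_{n,j,k}+\tfrac{2\pi}{k}c^{s,1,m}_{n,j,k}-\tfrac{4\pi^2}{k}(s-2m)c^{s,1,m+1}_{n,j,k}$, which collapses to the stated closed form by direct cancellation of Gamma factors; no Vandermonde-type summation identities are needed. What does require explicit attention---and what your proposal only gestures at---are the boundary indices: one must use $c^{s,1,0}_{n,j,k}=0$, observe that at $m=\lfloor s/2\rfloor$ either $s-2m=0$ or $\MinTen{k}{0}{1}\equiv 0$, and incorporate the separately listed terms $k=j$ and $k=n$ of Theorem \ref{Thm_KF_tCM}, which is where the conventions $e^{s,m,0}_{n,j,j}=\1\{m=0\}$ and $e^{s,\frac{s}{2},0}_{n,j,n}=e^{s}_{n,j}$ and the special definition at $k=j=0$ are actually verified.
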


      In Theorem \ref{Thm_KF_tiMT} one could define the coefficient $e_{n, j, k}^{s, \lfloor \frac s 2 \rfloor, 0}$ as for general $m$, since the definitions coincide for even $s$, and for odd $s$ the difference  does not have any effect, since $\MinTen{k}{0}{1} \equiv 0$.
      However, for subsequent use we have already defined them here in the appropriate way.

      For $k = j$, we note that the coefficient in Theorem \ref{Thm_KF_tiMT} is given by
      $ e_{n, j, j}^{s, m, 0}  = \1\{ m = 0 \}$.

      For $k = n$, we note that the Minkowski tensors $\MinTen{n}{0}{s - 2m}$ vanish if $m \neq \frac s 2$. In the case of $m = \frac s 2$ (and hence $s$ even), the corresponding coefficient is given by
      \begin{align*}
        e_{n, j, n}^{s, \frac s 2, 0} & = \frac{1}{(2 \sqrt\pi)^{s} \frac{s}{2}!} \frac {\Gamma(\frac {n} 2)} {\Gamma(\frac {n + s} 2)} \frac {\Gamma(\frac {n - j + s} 2)} {\Gamma(\frac{n - j}{2})}.
      \end{align*}

      If $j = 0$, then the kinematic integral equals zero, if $s$ is odd (the same is true for the sum on the right side). For even $s$, the only non-vanishing coefficients  for $k \in \{ 1, \ldots, n \}$ on the right side of the kinematic formula are $e_{n, 0, k}^{s, \frac s 2, 0}$, as in that case the ratio ${\Gamma(\frac {j + s} 2 - m)} / {\Gamma(\frac {j} 2)}$ is read as $\1 \{ m = \frac s 2 \}$, since for $m = \frac s 2$ the two Gamma functions cancel and for $m < \frac s 2$ the term vanishes.

  \subsection{General Minkowski tensors}

      In this section, we state the kinematic formula for general Minkowski tensors.
      The representation of the kinematic integral will be more involved as compared to the translation invariant case treated in Theorem \ref{Thm_KF_tiMT}. This is due to the fact that  Lemma \ref{Lem_McM_MT_gtCM} contributes Minkowski tensors to the formula which did not occur before and cannot  be combined with the other Minkowski tensors.
      Nevertheless, the explicit representation of the integral geometric formula is still surprisingly simple (in particular if compared to the Crofton formulae in \cite{HugSchnSchu08a}, which can be applied to derive kinematic formulae).

      \begin{Theorem} \label{Thm_KF_MT}
        Let $K, K' \in \cK^n$ and $j, r, s \in \N_{0}$ with $j \leq n$, where $s = 0$ if $j = n$. Then
        \begin{align*}
          \int_{\GOp_{n}} \MinTen{j}{r}{s} (K \cap g K') \, \mu( \intd g) & = \sum_{k = j}^{n} \sum_{p = 0}^{r} \sum_{m = 0}^{\lfloor \frac s 2 \rfloor} e_{n, j, k}^{s, m, p} \, Q^{m} \MinTen{k + p}{r - p}{s - 2m + p}(K)  \IVol{n - k + j}{0}{0}(K'),
        \end{align*}
        where the coefficients $e_{n, j, k}^{s, m, 0}$ are defined as in Theorem \ref{Thm_KF_tiMT}.
        For  $p = 1, \ldots, r$ and $m = 0, \ldots, \lfloor \frac s 2 \rfloor - 1$ the coefficients are
        \begin{align*}
          e_{n, j, k}^{s, m, p} & : = \frac{m \tfrac{k - p}{k} - \tfrac{s + p}{2} \tfrac{k - j}{k}}{(4\pi)^{m} m!}
          \frac {\Gamma(\frac {k} 2 + 1)} {\Gamma(\frac {j} 2 + 1)}
          \frac {\Gamma(\frac {j + s} 2 - m)} {\Gamma(\frac {k + s} 2 + 1)}
          \frac {\Gamma(\frac {k - j} 2 + m)} {\Gamma(\frac{k - j}{2})} \alpha_{n, j, k},
        \end{align*}
         with $\alpha_{n, j, k}$ as in \eqref{Form_Princ_KF}, and
        \begin{align*}
          e_{n, j, k}^{s, \lfloor \frac s 2 \rfloor, p} :=
            \frac{1}{(4\pi)^{\lfloor \frac s 2 \rfloor} \lfloor \frac s 2 - 1 \rfloor!}
            \frac {\Gamma(\frac {k} 2)} {\Gamma(\frac {j} 2 + 1)}
            \frac {\Gamma(\frac {j + s} 2 - \lfloor \frac s 2 \rfloor + 1)} {\Gamma(\frac {k + s} 2 + 1)}
            \frac {\Gamma(\frac {k - j} 2 + \lfloor \frac s 2 \rfloor)} {\Gamma(\frac{k - j}{2})} \alpha_{n, j, k},
        \end{align*}
        except for the case $j=k=0$ where $e_{n, j, k}^{s, m, p}:=0$ for $p\ge 1$ and $m=0,\ldots,\lfloor \frac s 2 \rfloor$.
      \end{Theorem}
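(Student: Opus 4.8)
The plan is to run the same argument as in the proof of Theorem~\ref{Thm_KF_tiMT}, but now with the position-vector weight $x^{r}$ present: one combines the kinematic formulae for the (generalized) tensorial curvature measures recalled in the appendix (from \cite{HugWeis16b}) with the McMullen-type relation of Lemma~\ref{Lem_McM_MT_gtCM} and its iteration, conclusion~\eqref{Form_McM_MT_gtCM}. First I would record that the integrand $g\mapsto\MinTen{j}{r}{s}(K\cap gK')$ is well defined and $\mu$-integrable: it vanishes whenever $K\cap gK'=\emptyset$ and is otherwise bounded in terms of the support measures of $K\cap gK'$, so measurability and integrability are inherited from the scalar kinematic formula~\eqref{Form_Princ_KF}. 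Since both sides depend continuously on $K$ and $K'$ (as for~\eqref{Form_Princ_KF}), one may, if convenient, assume $K$ and $K'$ to be polytopes and work with the explicit representation~\eqref{Form_MT_Pol}, the general case following by approximation.

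Next I would apply Lemma~\ref{Lem_McM_MT_gtCM}, i.e. relation~\eqref{Form_McM_MT_gtCM}, to the convex body $K\cap gK'$. This rewrites $\MinTen{j}{r}{s}(K\cap gK')$ as a finite linear combination, with universal coefficients and with powers of the metric tensor $Q$, of \emph{total} generalized tensorial curvature measures of $K\cap gK'$, for which explicit kinematic formulae are available; the net effect is that the position weight $x^{r}$ is resolved in favour of additional internal tensor indices. Integrating over $\GOp_{n}$ against $\mu$ and interchanging the finite sum with the integral reduces everything to the kinematic integrals $\int_{\GOp_{n}}\phi(K\cap gK',\cdot)\,\mu(\intd g)$ of these generalized tensorial curvature measures, which are given by the formulae recalled in the appendix. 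Evaluating those formulae on the full space, each produces a linear combination of products of a total generalized tensorial curvature measure of $K$ with a contribution of $K'$; as in Theorem~\ref{Thm_KF_tiMT} the latter, being integrated out under the invariant measure, reduces to an intrinsic volume $\IVol{n-k+j}{0}{0}(K')$, and the passage from $r=0$ to general $r$ affects only the $K$-side tensors and the $Q$-factors, not the $K'$-side.

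It then remains to convert the total generalized tensorial curvature measures of $K$ occurring on the right-hand side back into Minkowski tensors of $K$, once more by Lemma~\ref{Lem_McM_MT_gtCM} (now used in the direction that expresses these totals through the tensors $Q^{m}\MinTen{k+p}{r-p}{s-2m+p}(K)$, equivalently by inverting the relation used in the previous step). This is exactly the point at which, as anticipated before the statement, genuinely new Minkowski tensors with $p\ge 1$ enter, which cannot be merged with the tensors already present in Theorem~\ref{Thm_KF_tiMT}; the index shift $(k,r,s)\mapsto(k+p,r-p,s-2m+p)$ is forced by this back-conversion. Collecting all contributions, using $\MinTen{k}{0}{1}\equiv 0$ together with the conventions extending the Minkowski tensors by zero outside the admissible index range (which in particular disposes of the terms with $k+p>n$), and comparing the part $p=0$ with Theorem~\ref{Thm_KF_tiMT} as a consistency check, one identifies the coefficient of $Q^{m}\MinTen{k+p}{r-p}{s-2m+p}(K)\,\IVol{n-k+j}{0}{0}(K')$ with $e_{n,j,k}^{s,m,p}$, treating the degenerate case $j=k=0$ separately.

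The main obstacle is the coefficient computation in this last step. The coefficients emerging from composing the iterated McMullen relation, the kinematic formulae for tensorial curvature measures, and the back-conversion to Minkowski tensors are sums of products of Gamma-function ratios, binomial factors and the normalizing constants $\omega_{n-j+s}$ hidden in the definition of the Minkowski tensors; one has to show that they telescope to the compact closed forms $e_{n,j,k}^{s,m,p}$, including the separate shape of $e_{n,j,k}^{s,\lfloor s/2\rfloor,p}$ and the degenerate case $j=k=0$. The $p=0$ case is pinned down by Theorem~\ref{Thm_KF_tiMT}; the genuinely new feature is the prefactor $m\tfrac{k-p}{k}-\tfrac{s+p}{2}\tfrac{k-j}{k}$, whose appearance must be tracked carefully through the index shift and reconciled with the Gamma factors coming from $\alpha_{n,j,k}$ and from the curvature-measure kinematic formulae. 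This Gamma-function bookkeeping, rather than any conceptual difficulty, is what carries the bulk of the work.
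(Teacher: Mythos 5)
There is a structural error in your second step. You propose to first apply Lemma~\ref{Lem_McM_MT_gtCM} (via \eqref{Form_McM_MT_gtCM}) to the body $K\cap gK'$ in order to rewrite $\MinTen{j}{r}{s}(K\cap gK')$ through total generalized tensorial curvature measures, on the grounds that this ``resolves the position weight $x^{r}$''. This step is both unnecessary and unworkable as described. It is unnecessary because $\MinTen{j}{r}{s}(K\cap gK')=\TenCM{j}{r}{s}{0}(K\cap gK',\R^{n})$ by definition, so Theorem~\ref{Thm_KF_tCM} with $\beta=\beta'=\R^{n}$ applies verbatim to the integrand for arbitrary $r$; the index $r$ is simply carried through on the $K$-side, and no preliminary conversion of the integrand is needed. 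It is unworkable because the kinematic formula recalled in the appendix has the ordinary tensorial curvature measures $\TenCM{j}{r}{s}{0}$ as integrands, not the generalized ones $\TenCM{j}{r}{s}{1}$; moreover, \eqref{Form_McM_MT_gtCM} runs in the direction from generalized total measures to Minkowski tensors, and inverting it to express a single $\MinTen{j}{r}{s}$ requires dividing by $2\pi s$ in Lemma~\ref{Lem_McM_MT_gtCM} (impossible for $s=0$, which is a nontrivial case of the theorem when $r>0$) and still leaves a residual term $Q\MinTen{j}{r}{s-2}$ forcing a recursion you do not account for.

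The correct order of operations is the reverse composition: apply Theorem~\ref{Thm_KF_tCM} globally first; its right-hand side then contains the terms $Q^{m-1}\TenCM{k}{r}{s-2m}{1}(K,\R^{n})$, and only these are converted back into Minkowski tensors via \eqref{Form_McM_MT_gtCM} --- this is precisely where the summation over $p$ and the shifted tensors $\MinTen{k+p}{r-p}{s-2m+p}$ arise, as you correctly anticipate in your third step. So the second half of your outline matches the intended argument, but the initial conversion must be deleted, and the coefficient identification (including the separate value of $e_{n,j,k}^{s,\lfloor s/2\rfloor,p}$, which matters because $\MinTen{k}{r}{1}\not\equiv 0$ for $r>0$) is left entirely as an unexecuted computation.
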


      Note that $e_{n, j, k}^{s, \lfloor \frac s 2 \rfloor, p}=0$ for $s=0$, since $((-1)!)^{-1}=\Gamma(0)^{-1}=0$.
      In Theorem \ref{Thm_KF_MT},  we also have $e_{n, j, j}^{s, m, p} = 0$ (for $j=0$ this holds by definition). If $p > 0$ and $k = n$, then the Minkowski tensors $\MinTen{k + p}{r - p}{s - 2m + p}$ vanish. For further simplifications of the coefficients, see the remark after Theorem \ref{Thm_KF_tiMT}.

  \section{Crofton formulae for Minkowski tensors} \label{Sec_CF_MT}

    In this section, we state the complete set of Crofton formulae for Minkowski tensors, which can be derived from the corresponding formulae for tensorial curvature measures in~\cite{HugWeis16c}. That is, for $K \in \cK^{n}$, we explicitly express integrals of the form
    \begin{align*}
      \int_{\AOp(n, k)} \MinTen{j}{r}{s} (K \cap E) \, \mu_k( \intd E)
    \end{align*}
    as a linear combination of Minkowski tensors of $K$ (multiplied with suitable powers of the metric tensor). As for the kinematic formulae, we only need a some of the Minkowski tensors.

   We start with the case $j = k$. For the sake of completeness, we mention this formula here, although it has already been derived in \cite{HugSchnSchu08a} by a completely different approach.

    \begin{Theorem} \label{Thm_CF_MT_j=k}
      Let $K \in \cK^n$ and $k, r, s, l \in \N_{0}$ with $k \leq n$, where $s = 0$ if $k = n$. Then,
      \begin{align*}
        \int_{\AOp(n, k)} \MinTen{k}{r}{s} (K \cap E) \, \mu_k( \intd E) = \1 \{ s \text{ even} \} \frac{1}{(4\pi)^{\frac s 2} \frac{s}{2}!} \frac{\Gamma(\frac{n}{2}) \Gamma(\frac{n - k + s}{2})}{\Gamma(\frac{n + s}{2}) \Gamma(\frac{n - k}{2})} \, Q^{\frac s 2} \MinTen{n}{r}{0}(K).
      \end{align*}
    \end{Theorem}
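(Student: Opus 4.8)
The plan is to treat the case $j=k$ directly, the point being that for $\mu_k$-almost all $E\in\AOp(n,k)$ the section $K\cap E$ is either empty or a convex body of full dimension $k$ inside $E$, and the remaining flats form a $\mu_k$-null set contributing nothing. We may assume $k<n$, the case $k=n$ being trivial. For a $k$-dimensional convex body $M\subset E$ the normal cone of $M$ at each relatively interior point equals $E^{\perp}$, so from the polytopal formula \eqref{Form_MT_Pol}, polytopal approximation, and the weak continuity of the support measures one gets that $\Lambda_k(M,\cdot)$ is, up to the factor $\omega_{n-k}^{-1}$, the product of $\cH^k$ restricted to $M$ and $\cH^{n-k-1}$ restricted to $E^{\perp}\cap\Sn$. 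Hence
\[
  \MinTen{k}{r}{s}(M)=\frac{1}{r!\,s!\,\omega_{n-k+s}}\left(\int_M x^r\,\cH^k(\intd x)\right)\left(\int_{E^{\perp}\cap\Sn}u^s\,\cH^{n-k-1}(\intd u)\right).
\]

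Next I would dispose of the two rotation-invariant integrals that occur. The spherical one vanishes for odd $s$ (replace $u$ by $-u$), and for $s=2\ell$ it equals $\gamma_{n-k,\ell}\,Q(E^{\perp})^{\ell}$, where rotation invariance within $E^{\perp}$ forces the tensor to be a multiple of $Q(E^{\perp})^{\ell}$ and a single contraction with a unit vector together with a Beta integral gives $\gamma_{d,\ell}=\frac{(2\ell)!}{4^{\ell}\ell!}\,\frac{\Gamma(d/2)}{\Gamma(d/2+\ell)}\,\omega_d$. In the same way, with $\nu_{q}$ the rotation-invariant probability measure on $\GOp(n,q)$, one has $\int_{\GOp(n,n-k)}Q(L)^{\ell}\,\nu_{n-k}(\intd L)=\delta_{n,n-k,\ell}\,Q^{\ell}$ with $\delta_{n,q,\ell}=\frac{\Gamma(q/2+\ell)\Gamma(n/2)}{\Gamma(q/2)\Gamma(n/2+\ell)}$, using that $\|P_L e\|^2$ has a $\mathrm{Beta}(\frac{n-k}{2},\frac{k}{2})$ distribution for uniform $L\in\GOp(n,n-k)$ and a fixed unit vector $e$.

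Then I would carry out the integration over $\AOp(n,k)$. Disintegrating $\mu_k$ into $\nu_k$ on $\GOp(n,k)$ followed by Lebesgue measure on $(E^0)^{\perp}$, inserting the formula above with $s=2\ell$ (odd $s$ gives $0$ on both sides), and using that $Q(E^{\perp})^{\ell}$ depends only on the direction space $E^{0}$, Fubini for the splitting $\R^n=E^{0}\oplus(E^{0})^{\perp}$ turns the integral of $\int_{K\cap E}x^r\,\cH^k(\intd x)$ over the translates of a fixed direction into $\int_K x^r\,\cH^n(\intd x)=r!\,\MinTen{n}{r}{0}(K)$; what is left over is the factor $\int_{\GOp(n,n-k)}Q(L)^{\ell}\,\nu_{n-k}(\intd L)=\delta_{n,n-k,\ell}\,Q^{\ell}$ from the previous step. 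This yields
\[
  \int_{\AOp(n,k)}\MinTen{k}{r}{s}(K\cap E)\,\mu_k(\intd E)=\1\{s\text{ even}\}\,\frac{\gamma_{n-k,s/2}\,\delta_{n,n-k,s/2}}{s!\,\omega_{n-k+s}}\,Q^{s/2}\,\MinTen{n}{r}{0}(K),
\]
and a routine rearrangement of Gamma functions ($4^{\ell}\ell!=2^s(s/2)!$, $\omega_{n-k}/\omega_{n-k+s}=\pi^{-s/2}\Gamma(\tfrac{n-k+s}{2})/\Gamma(\tfrac{n-k}{2})$, $2^s\pi^{s/2}=(4\pi)^{s/2}$, and $\Gamma(\tfrac{n}{2}+\tfrac{s}{2})=\Gamma(\tfrac{n+s}{2})$) collapses the constant to $\frac{1}{(4\pi)^{s/2}(s/2)!}\,\frac{\Gamma(n/2)\,\Gamma((n-k+s)/2)}{\Gamma((n+s)/2)\,\Gamma((n-k)/2)}$, which is the assertion.

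I do not expect a genuine conceptual obstacle: the work is entirely in the bookkeeping — correctly identifying $\Lambda_k(K\cap E,\cdot)$ for a flat section (and checking that the null set of bad flats is harmless), evaluating the two invariant-tensor integrals consistently with the chosen normalisations of $\mu_k$, $\nu_{n-k}$, $\omega_{\bullet}$ and of the symmetric tensor product, and finally simplifying the product of Gamma functions to the closed form above. Alternatively, the statement follows by globalising the Crofton formula for tensorial curvature measures from \cite{HugWeis16c}, and it was first obtained by a different method in \cite{HugSchnSchu08a}.
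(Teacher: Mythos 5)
Your proof is correct, but it is genuinely different from what the paper does: the paper gives no computation at all for this theorem and simply observes that it is the global case $\beta=\R^{n}$ of Theorem \ref{Thm_CF_tCM_j=k}, i.e.\ of the Crofton formula for tensorial curvature measures with $j=k$ taken from \cite{HugWeis16c} (and notes that the result already appears in \cite{HugSchnSchu08a}) --- exactly the alternative you mention in your last sentence. Your direct argument is self-contained and checks out: for $\mu_k$-a.e.\ $E$ meeting $K$ the section is $k$-dimensional (the touching flats form a null set, and if $\dim K<n$ both sides vanish), so $\Lambda_k(K\cap E,\cdot)=\omega_{n-k}^{-1}\,(\cH^k\llcorner (K\cap E))\otimes(\cH^{n-k-1}\llcorner(E^{\perp}\cap\Sn))$ and $\MinTen{k}{r}{s}$ factorizes as you state; your constants $\gamma_{d,\ell}$ and $\delta_{n,q,\ell}$ are the correct Beta-moment evaluations ($\langle U,e\rangle^{2}\sim\mathrm{Beta}(\tfrac12,\tfrac{d-1}{2})$ on $\Sp{d-1}$ and $\|P_Le\|^{2}\sim\mathrm{Beta}(\tfrac{q}{2},\tfrac{n-q}{2})$ on $\GOp(n,q)$, evaluated against $Q(\cdot)^{\ell}(e,\ldots,e)=1$), the disintegration $\mu_k=\nu_k\otimes\cH^{n-k}$ matches the normalization of \cite[p.~588]{SchnWeil08}, and I have verified that $\gamma_{n-k,s/2}\,\delta_{n,n-k,s/2}/(s!\,\omega_{n-k+s})$ collapses to the stated coefficient via $(2\ell)!=s!$ and $\omega_{n-k}/\omega_{n-k+s}=\pi^{-s/2}\Gamma(\tfrac{n-k+s}{2})/\Gamma(\tfrac{n-k}{2})$. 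What your route buys is an elementary, from-scratch derivation that makes the origin of each Gamma factor transparent and avoids any appeal to the local (measure-valued) machinery; what the paper's route buys is brevity and consistency, since the same local Crofton formulae from \cite{HugWeis16c} are needed anyway for the harder cases $j<k$, where your product-measure argument is not available.
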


    It is not necessary to state a proof of Theorem \ref{Thm_CF_MT_j=k}.
    In fact, the formula is an immediate consequence of Theorem \ref{Thm_CF_tCM_j=k} (see also \cite[Theorem~1]{HugWeis16c}), which is obtained by simply setting $\beta = \R^{n}$.
    If $k = n$, then the Minkowski tensor on the left side of the formula vanishes if $s \not = 0$, and so does the factor $\Gamma(\frac{n - k + s}{2}) / \Gamma(\frac{n - k}{2})$ on the right side.

  \subsection{Translation invariant Minkowski tensors}

    We proceed with the Crofton formulae in the case of $j < k$, and start with the translation invariant Minkowski tensors.

    \begin{Theorem} \label{Thm_CF_tiMT_j<k}
      Let $K \in \cK^{n}$ and $j, k, r, s \in \N_{0}$ with $j < k \leq n$, where $s = 0$ if $j = n$. Then,
      \begin{align*}
        \int_{\AOp(n, k)} \MinTen{j}{0}{s} (K \cap E) \, \mu_k( \intd E) = \sum_{m = 0}^{\lfloor \frac s 2 \rfloor} e_{n, j, n - k + j}^{s, m, 0} \, Q^{m} \MinTen{n - k + j}{0}{s - 2m} (K),
      \end{align*}
      where the coefficients $e_{n, j, n - k + j}^{s, m, 0}$ are defined as in Theorem \ref{Thm_KF_tiMT}.
    \end{Theorem}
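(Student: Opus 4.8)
The plan is to obtain this formula by globalizing the Crofton formula for tensorial curvature measures from \cite{HugWeis16c}, namely the $j<k$ counterpart of Theorem~\ref{Thm_CF_tCM_j=k} recalled in the appendix. For translation invariant Minkowski tensors the McMullen-type relation of Section~\ref{secprep} is not needed, since for $r=0$ the Minkowski tensor is already a total tensorial curvature measure: directly from the definitions one has $\TenCM{j}{0}{s}{0}(L,\Sigma^{n})=\MinTen{j}{0}{s}(L)$ for every $L\in\cK^{n}$ and $j\in\{0,\dots,n-1\}$, and more generally $\TenCM{j}{0}{s}{l}(L,\Sigma^{n})$ is a constant multiple of $Q^{l}\MinTen{j}{0}{s}(L)$. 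In particular, for $E\in\AOp(n,k)$ with $j<k$ we have $\MinTen{j}{0}{s}(K\cap E)=\TenCM{j}{0}{s}{0}(K\cap E,\Sigma^{n})$, also when $K\cap E$ is lower-dimensional.

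The first step is to expand
\[
  \int_{\AOp(n,k)}\TenCM{j}{0}{s}{0}(K\cap E,\,\cdot\,)\,\mu_{k}(\intd E)
\]
according to the tensorial curvature measure Crofton formula: it is an explicit linear combination, with Gamma-function coefficients, of terms $\TenCM{n-k+j}{0}{s-2i}{i}(K,\,\cdot\,)$ and (as the notation $\IntTenCM{n-k+j}{0}{\cdot}{\cdot}{\cdot}$ suggests) of integrated tensorial curvature measures of $K$. I would then evaluate both sides at $\Sigma^{n}$. On the left this produces $\int_{\AOp(n,k)}\MinTen{j}{0}{s}(K\cap E)\,\mu_{k}(\intd E)$, the quantity in question. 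On the right, each $\TenCM{n-k+j}{0}{s-2i}{i}(K,\Sigma^{n})$ becomes a constant times $Q^{i}\MinTen{n-k+j}{0}{s-2i}(K)$, while each integrated term, being by definition an average over a Grassmannian of a rotation covariant tensor-valued functional of $K$, also globalizes: by Fubini and the standard averaging identity $\int_{\GOp(n,i)}Q(W)^{a}\,\nu_{i}(\intd W)=c_{n,i,a}\,Q^{a}$ it collapses to a further $Q$-multiple of some $\MinTen{n-k+j}{0}{s-2m}(K)$. Hence the right-hand side is a linear combination of the tensors $Q^{m}\MinTen{n-k+j}{0}{s-2m}(K)$, $0\le m\le\lfloor s/2\rfloor$, and no integrated Minkowski tensor survives — which is precisely why the translation invariant case takes the simple form claimed.

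It then remains, for each fixed $m$, to gather all contributions to the coefficient of $Q^{m}\MinTen{n-k+j}{0}{s-2m}(K)$ — the honest term with $i=m$ together with those integrated terms that produce $Q^{m}$ after the Grassmannian averaging — and to verify that their sum equals $e_{n,j,n-k+j}^{s,m,0}$ as defined in Theorem~\ref{Thm_KF_tiMT}. This Gamma-function bookkeeping is the main obstacle: the several contributions have to telescope into the stated closed form, and the computation must reproduce both the generic expression for $m<\lfloor s/2\rfloor$ and the separate one at $m=\lfloor s/2\rfloor$. Finally I would dispose of the degenerate cases directly: if $j=0$ and $s$ is odd, both sides vanish; and if $n-k+j=0$, i.e.\ $j=0$ and $k=n$, then $\AOp(n,n)$ carries unit mass at $\R^{n}$ and the formula reduces to the trivial identity $\MinTen{0}{0}{s}(K)=\MinTen{0}{0}{s}(K)$, in accordance with $e_{n,0,0}^{s,m,0}=\1\{m=0\}$.
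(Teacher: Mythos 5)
Your overall strategy---globalize the Crofton formula for tensorial curvature measures by setting $\beta=\R^{n}$---is the right one and is what the paper does, but there is a genuine gap at the central step. You claim that McMullen's relation is not needed because $\TenCM{j}{0}{s}{l}(L,\cdot)$ evaluated on all of space is a constant multiple of $Q^{l}\MinTen{j}{0}{s}(L)$. This is false for $l=1$. The right-hand side of Theorem~\ref{Thm_CF_tCM_j<k} contains the \emph{generalized} tensorial curvature measures $\TenCM{n-k+j}{0}{s-2m}{1}(K,\R^{n})$, whose definition (for polytopes) carries the face-dependent tensor $Q(F)$ inside the sum over the $(n-k+j)$-faces; this factor cannot be pulled out as a power of the metric tensor $Q$. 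The correct identity, which is exactly the $r=0$ case of \eqref{Form_McM_MT_gtCM} and hence rests on Lemma~\ref{Lem_McM_MT_gtCM}, reads
\begin{align*}
\TenCM{k}{0}{s-2m}{1}(P,\R^{n}) = \tfrac{2\pi}{k}\, Q\, \MinTen{k}{0}{s-2m}(P) - \tfrac{4\pi^{2}}{k}\,(s-2m+2)\, \MinTen{k}{0}{s-2m+2}(P),
\end{align*}
a combination of \emph{two} Minkowski tensors with different tensorial degree in the normal variable. It is precisely this mixing that forces a shift of the summation index $m$ and produces the stated coefficients $e_{n,j,n-k+j}^{s,m,0}$ as the combination $c_{n,j,n-k+j}^{s,0,m}+\tfrac{2\pi}{n-k+j}c_{n,j,n-k+j}^{s,1,m}-\tfrac{4\pi^{2}}{n-k+j}(s-2m)c_{n,j,n-k+j}^{s,1,m+1}$. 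Without Lemma~\ref{Lem_McM_MT_gtCM} the ``Gamma-function bookkeeping'' you defer to cannot even be set up, because the terms to be gathered are misidentified.

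Two secondary inaccuracies: the Crofton formula recalled in the appendix involves no integrated tensorial curvature measures, so the Grassmannian averaging step you describe does not occur (the globalization collapses to nothing more than evaluating measures on $\cB(\R^{n})$ at $\beta=\R^{n}$, not at $\Sigma^{n}$); and in the expansion the type index $i\in\{0,1\}$ runs independently of the summation index $m$, rather than being tied to it as your notation $\TenCM{n-k+j}{0}{s-2i}{i}$ suggests. Your treatment of the degenerate cases is fine.
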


    For $j = 0$, we have $\Gamma(\frac {j + s} 2 - m) / \Gamma(\frac {j} 2) = \1 \{ m = \frac s 2 \}$. Thus in this case, the only remaining summand on the right-hand side of the Crofton formula in Theorem \ref{Thm_CF_tiMT_j<k} is $e_{n, 0, n-k}^{s, \frac s 2,0} \, Q^{\frac s 2} \MinTen{n - k + j}{0}{0} (K)$, if $s$ is even (otherwise the integral on the left-hand side vanishes).
    We note that Theorem \ref{Thm_CF_tiMT_j<k} coincides with Theorem 3 in \cite{BernHug15}, which was derived by a completely different algebraic approach.

  \subsection{General Minkowski tensors}

    Finally, we state the Crofton formula for general Minkowski tensors.
    Similar to the kinematic formulae, we conclude from Lemma \ref{Lem_McM_MT_gtCM} that the representation of the Crofton integrals involves more Minkowski tensors than in the translation invariant case.

    \begin{Theorem} \label{Thm_CF_MT_j<k}
      Let $K \in \cK^{n}$ and $j, k, r, s \in \N_{0}$ with $j < k \leq n$. Then,
      \begin{align*}
        \int_{\AOp(n, k)} \MinTen{j}{r}{s} (K \cap E) \, \mu_k( \intd E) & = \sum_{p = 0}^{r}  \sum_{m = 0}^{\lfloor \frac s 2 \rfloor} e_{n, j, n - k + j}^{s, m, p} \, Q^{m} \MinTen{n - k + j + p}{r - p}{s - 2m + p}(K),
      \end{align*}
      where the coefficients $e_{n, j, n - k + j}^{s, m, p}$ are defined as in Theorem \ref{Thm_KF_tiMT} and  \ref{Thm_KF_MT}.
    \end{Theorem}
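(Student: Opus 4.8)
The plan is to prove Theorem~\ref{Thm_CF_MT_j<k} by the three-step mechanism already used for the translation invariant Crofton formula (Theorem~\ref{Thm_CF_tiMT_j<k}) and for the kinematic formulae (Theorems~\ref{Thm_KF_tiMT} and~\ref{Thm_KF_MT}): localize, integrate, globalize. The left-hand side needs no preparation: by the definitions recalled in the appendix, $\MinTen{j}{r}{s}(K\cap E)$ is, up to a normalizing constant, the total tensorial curvature measure $\phi^{r,s}_j(K\cap E,\Sigma^n)$, and since $j<k\le n$ forces $j\le n-1$ we stay in the support-measure regime, so the volume term is irrelevant. Thus all the work sits on the right-hand side, and the only phenomenon genuinely new compared with the case $r=0$ (which is Theorem~\ref{Thm_CF_tiMT_j<k}) is the extra summation over $p$, contributed by the treatment of the position powers $x^r$ in the McMullen-type identity~\eqref{Form_McM_MT_gtCM}.

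Concretely, for general $r$ I would argue as follows. First apply the Crofton formula for tensorial curvature measures from~\cite{HugWeis16c}, recalled in the appendix, with the measuring set taken to be all of $\Sigma^n$, i.e.\ $\beta=\R^n$. As $j<k$, its right-hand side genuinely involves generalized tensorial curvature measures $\phi^{r',s',l}_{j'}(K,\cdot)$ with $l\ge1$; after totalization one gets a finite $\R$-linear combination of total generalized tensorial curvature measures of the form $Q^m\,\phi^{r-p,s',l}_{n-k+j+p}(K,\Sigma^n)$, $0\le p\le r$, carrying the index shift $k\mapsto n-k+j$ familiar from the classical Crofton formula~\eqref{Form_Croft_Classic}. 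Interchanging this finite sum with $\int_{\AOp(n,k)}(\cdot)\,\mu_k(\intd E)$ is permitted, since each summand is a continuous (hence $\mu_k$-integrable) valuation and the relevant integrals are known to exist by~\cite{HugWeis16c}. Finally apply the McMullen-type identity~\eqref{Form_McM_MT_gtCM} (a consequence of Lemma~\ref{Lem_McM_MT_gtCM}) to each $\phi^{r-p,s',l}_{n-k+j+p}(K,\Sigma^n)$: it rewrites every such total generalized tensorial curvature measure as a linear combination of Minkowski tensors $Q^a\MinTen{n-k+j+p}{r-p}{s-2m+p}(K)$, which is precisely the shape of the terms claimed.

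The main obstacle, exactly as for Theorem~\ref{Thm_KF_MT}, is purely computational: one must check that the product of the three coefficient families — those of the Crofton formula for generalized tensorial curvature measures, those arising in~\eqref{Form_McM_MT_gtCM} upon totalization, and those of the concluding McMullen step — collapses precisely to the constants $e_{n,j,n-k+j}^{s,m,p}$ of Theorems~\ref{Thm_KF_tiMT} and~\ref{Thm_KF_MT}. This amounts to a chain of elementary Gamma-function identities, with some care needed at the endpoint $m=\lfloor s/2\rfloor$, where the closed form of $e_{n,j,k}^{s,m,p}$ changes; at $s=0$, where the factor $(\lfloor s/2-1\rfloor!)^{-1}=\Gamma(0)^{-1}=0$ annihilates the endpoint term; and in the degenerate ranges $j=0$ and $k=n$, where several of the Minkowski tensors on the right-hand side vanish and some Gamma-ratios must be read as indicator functions. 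One should also confirm that the assignment supplied by~\eqref{Form_McM_MT_gtCM} is sufficiently \enquote{triangular} that no Minkowski tensors beyond those listed survive. Two sanity checks are available: for $k=n$ one has $n-k+j=j$ and $e_{n,j,j}^{s,m,p}=\1\{m=0\}\,\1\{p=0\}$, so the right-hand side correctly collapses to $\MinTen{j}{r}{s}(K)$ and no circularity with Theorem~\ref{Thm_CF_MT_j=k} arises; and for $r=0$ the result must reproduce Theorem~\ref{Thm_CF_tiMT_j<k}, equivalently \cite[Theorem~3]{BernHug15}.
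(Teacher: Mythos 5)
Your proposal follows essentially the same route as the paper: apply the Crofton formula for tensorial curvature measures (Theorem~\ref{Thm_CF_tCM_j<k}) with $\beta=\R^n$, rewrite the resulting total generalized tensorial curvature measures via the iterated McMullen identity \eqref{Form_McM_MT_gtCM} to produce the summation over $p$, and observe that the coefficient bookkeeping is identical to that already carried out for Theorem~\ref{Thm_KF_MT}. The only cosmetic differences are that the paper states the polytope-first-then-approximate reduction explicitly and that no interchange of sum and integral is actually needed, since the cited Crofton formula already delivers the integral in closed form.
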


    In Theorem \ref{Thm_CF_MT_j<k}, for $j = k - 1$, the only summand remaining in the summation with respect to $p$ is the one for $p = 0$, as the Minkowski tensors, which occur for $p > 0$, vanish.

\section{Preparations}\label{secprep}

    In this section, we state a lemma proved by McMullen (see \cite[p.~269]{McMullen97}), which provides a relation between Minkowski tensors and
    certain total generalized tensorial curvature measures (further details on these can be found in \cite{HugWeis16b}).
    Following the presentation in \cite{McMullen97}, for a polytope $P \in \cP^{n}$ with face $F \in \cF_{k}(P)$, $k \leq n$, we define
    \begin{align*}
      \Upsilon_{r}(F) := \frac{1}{r!} \int_{F} x^{r} \, \cH^{k}(\intd x),
    \end{align*}
    for $r \in \N_{0}$, and
    \begin{align*}
      \Theta_{s}(P, F) :=\frac{1}{s!} \frac{1}{\omega_{n - k + s}} \int_{N(P, F) \cap \S} u^{s} \, \cH^{n - k - 1} (\intd u),
    \end{align*}
    for $s \in \N_{0}$. If $r \notin \N_{0}$, then we set $\Upsilon_{r}(F) := 0$, and if $s \notin \N_{0}$, then $\Theta_{s}(P, F) := 0$.
     Then we obtain
    \begin{align*}
      \MinTen{k}{r}{s}(P) = \sum_{F \in \cF_{k}(P)} \Upsilon_{r}(F) \Theta_{s}(P, F),
    \end{align*}
    using the just defined functionals. Furthermore, for $k\in\{0,\ldots,n-1\}$ and integers $r,s$, we obtain
    \begin{align*}
      \TenCM{k}{r}{s - 2}{1}(P, \R^{n}) = \tfrac{2\pi}{k} \sum_{F \in \cF_{k}(P)} Q(F) \Upsilon_{r}(F) \Theta_{s - 2}(P, F),
    \end{align*}
    where $\TenCM{k}{r}{s - 2}{1}(P, \R^{n})$ are the total generalized tensorial curvature measures
    of $P$ (for definitions and further remarks, see Appendix \ref{Sec_Appendix}).

    Next, we can state McMullen's lemma, which has been proved in a different way by Ralph Schuster (see  {\cite[Lemma~2.3.5]{Schuster04}}).
    \begin{Lemma}[{\cite[p.~269]{McMullen97}}] \label{Lem_McM_MT_gtCM}
      Let $P \in \cP^{n}$ be a polytope, let $r, s $ be integers and $k \in \{ 0, \ldots, n \}$. Then
      \begin{align*}
        2 \pi s \, \MinTen{k}{r}{s}(P) & = \sum_{F \in \cF_{k}(P)} Q(F^{\perp}) \Upsilon_{r}(F) \Theta_{s - 2}(P, F) \\
        & \qquad  + \sum_{G \in \cF_{k + 1}(P)} Q(G) \Upsilon_{r - 1}(G) \Theta_{s - 1}(P, G).
      \end{align*}
    \end{Lemma}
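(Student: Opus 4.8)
The plan is to compute the left-hand side directly from the polytopal representation \eqref{Form_MT_Pol} of the Minkowski tensor and reduce the claim to an identity for the integral of $u^{s}$ over a spherical polytope $N(P,F)\cap\S$. Starting from
$\MinTen{k}{r}{s}(P)=\sum_{F\in\cF_{k}(P)}\Upsilon_{r}(F)\Theta_{s}(P,F)$, the task is to show that, for each face $F\in\cF_k(P)$,
\[
2\pi s\,\Theta_s(P,F)=Q(F^{\perp})\,\Theta_{s-2}(P,F)+\sum_{G\supset F,\ G\in\cF_{k+1}(P)}\frac{\Upsilon_{r-1}(G)}{\Upsilon_{r}(F)}\,Q(G)\,\Theta_{s-1}(P,G),
\]
but since the $G$-sum mixes different faces, the cleaner route is to reorganize the full second sum in the statement: each $(k{+}1)$-face $G$ appears once, and $Q(G)=Q(G^{0})$ splits as $Q(F^{0})+Q(L_{F,G})$ where $L_{F,G}$ is the one-dimensional space spanned by the direction of $G$ orthogonal to $F$ inside $G^{0}$. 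The first I would therefore do is fix $F\in\cF_k(P)$ and work on the unit sphere of $F^{\perp}$, which has dimension $n-k-1$, integrating the map $u\mapsto u^{s}$ over the spherical polytope $\sigma:=N(P,F)\cap\S\subset F^{\perp}$.

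The key step is a divergence theorem (or integration by parts) on the sphere applied to the vector field $u\mapsto u^{s-1}$ (a symmetric-tensor-valued field) on $\sigma$. The relevant identity is the spherical Stokes/Green formula: for a smooth tangential vector field on a spherical polytope, the integral of its spherical divergence equals the boundary flux through the facets. Taking the field built from $u^{s-1}$ and using that the spherical gradient of the $j$th coordinate function is $e_j-u_j u$, one gets a relation of the schematic form $2\pi s\int_\sigma u^s = (\text{number depending on dimension})\cdot Q\!\restriction_{F^{\perp}}\int_\sigma u^{s-2}+\sum_{\text{facets }\sigma'}\int_{\sigma'} u^{s-1}\otimes(\text{outer normal of }\sigma'\text{ in }\sigma)$. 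The facets of $N(P,F)\cap\S$ correspond exactly to the normal cones $N(P,G)$ of the $(k{+}1)$-faces $G\supset F$, and the outer unit normal of such a facet, viewed inside $F^{\perp}$, is precisely the unit vector in $L_{F,G}$. Tracking the normalizing constants $\omega_{n-k+s}$ versus $\omega_{n-k+s-1}$ and the $\pi$ from $\omega_2=2\pi$ is what produces the factor $2\pi s$ and the shift $F^{\perp}\rightsquigarrow G^{\perp}$; after summing over all faces $F$ and collecting the boundary terms indexed by $G$, the $Q(F^{0})$-part of $Q(G)=Q(F^{0})+Q(L_{F,G})$ combines with the $Q(F^{\perp})$-terms to reconstitute $Q(G^{\perp})$ or cancels appropriately, and the $\Upsilon$-weights $\Upsilon_r(F)$ and $\Upsilon_{r-1}(G)$ fall out from the fact that the tangential derivative in the flat directions of $F$ contributes the factor that turns $\Upsilon_r(F)$ into $\Upsilon_{r-1}(G)$ when one also differentiates the polynomial $x^{r}$ over the face. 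Alternatively, and perhaps more in the spirit of McMullen's original half-page argument, I would invoke the known recursion for $\int_{\sigma}u^s\,\cH^{n-k-1}(\intd u)$ over spherical polytopes that expresses it via lower-rank integrals over $\sigma$ and over its facets, which is itself a consequence of the same Stokes argument, and then simply match coefficients.

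The main obstacle I anticipate is bookkeeping rather than conceptual: correctly identifying the outer normal of a facet $N(P,G)\cap\S$ of $N(P,F)\cap\S$ as a unit vector in $F^{\perp}\cap G^{0}$ and verifying that $Q(G)$ restricted to that geometry decomposes as claimed, together with getting every Gamma-function/$\omega$ normalization exactly right so that the constant is precisely $2\pi s$ with no stray factor. A secondary subtlety is the degenerate cases: when $s=0$ or $s=1$ the statement should reduce to a triviality ($\Theta_{-1}=\Theta_{-2}=0$ and $\Upsilon_{r-1}$ may vanish), and when $k=n-1$ the sphere $N(P,F)\cap\S$ is $0$-dimensional so the "integral" is a finite sum of two points and the Stokes argument must be interpreted combinatorially; I would check these boundary cases separately at the end. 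Once the per-face spherical identity is established with the right constants, summing over $\cF_k(P)$ and using the definitions of $\Upsilon$ and $\Theta$ yields the stated formula immediately.
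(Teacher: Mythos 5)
First, note that the paper itself does not prove Lemma \ref{Lem_McM_MT_gtCM}: it is quoted from McMullen \cite[p.~269]{McMullen97}, with a pointer to Schuster's dissertation for an alternative proof. So your sketch has to stand on its own. Its overall route is the right one (and is essentially the classical argument): apply a divergence theorem on the sphere $\S\cap F^{\perp}$ to $u\mapsto u^{s-1}$ over $\sigma_F:=N(P,F)\cap\S$, identify the facets of $\sigma_F$ with the cones $N(P,G)\cap\S$ for $G\in\cF_{k+1}(P)$, $G\supset F$, and their outer conormals with the unit vectors $w_{F,G}\in F^{\perp}\cap G^{0}$ pointing from $F$ into $G$, then reorganize the boundary terms by $G$. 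The unnormalized spherical identity reads
\[
(n-k+s-2)\int_{\sigma_F}u^{s}\,\cH^{n-k-1}(\intd u)=(s-1)\,Q(F^{\perp})\int_{\sigma_F}u^{s-2}\,\cH^{n-k-1}(\intd u)-\sum_{G\supset F}w_{F,G}\int_{N(P,G)\cap\S}u^{s-1}\,\cH^{n-k-2}(\intd u),
\]
and since $\omega_{n-k+s}/\omega_{n-k+s-2}=2\pi/(n-k+s-2)$, dividing by $(s-1)!\,\omega_{n-k+s-2}$ turns this into the clean per-face statement $2\pi s\,\Theta_{s}(P,F)=Q(F^{\perp})\,\Theta_{s-2}(P,F)-\sum_{G\supset F}w_{F,G}\,\Theta_{s-1}(P,G)$; so your expectation that the constant $2\pi s$ emerges from the $\omega$-normalizations is correct.

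The genuine gap is in how you produce the second sum of the lemma. After multiplying the per-face identity by $\Upsilon_{r}(F)$, summing over $F\in\cF_{k}(P)$ and regrouping by $G$, the boundary contribution is $\sum_{G}\Theta_{s-1}(P,G)\sum_{F\subset G}(-w_{F,G})\,\Upsilon_{r}(F)$, and the missing ingredient is a \emph{second}, Euclidean divergence theorem applied on each $(k+1)$-face $G$: since $-w_{F,G}$ is the outer normal of the facet $F$ relative to $\aff G$, one has $\sum_{F\,\mathrm{facet\,of}\,G}(-w_{F,G})\,\Upsilon_{r}(F)=Q(G)\,\Upsilon_{r-1}(G)$, because the gradient of $x\mapsto x^{r}/r!$ along $G^{0}$ is $\bigl(x^{r-1}/(r-1)!\bigr)Q(G)$. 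This single identity is what creates \emph{both} the factor $Q(G)$ and the shift $\Upsilon_{r}(F)\rightarrow\Upsilon_{r-1}(G)$; you allude to it only vaguely (``differentiates the polynomial $x^{r}$ over the face''). Your proposed alternative mechanism --- splitting $Q(G)=Q(F^{0})+Q(L_{F,G})$ and letting the $Q(F^{0})$-part ``combine with the $Q(F^{\perp})$-terms to reconstitute $Q(G^{\perp})$ or cancel'' --- is not what happens and would not close the argument: no $Q(G^{\perp})$ occurs in the statement, and the two sums on the right-hand side come from the two separate divergence theorems without interacting. With the face-wise divergence theorem inserted, the remaining work is exactly the normalization bookkeeping and the degenerate cases $s\in\{0,1\}$, $k\in\{n-1,n\}$ that you correctly flag.
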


    For $r = 0$, the second sum on the right side of the formula in Lemma \ref{Lem_McM_MT_gtCM} vanishes. If also $s = 1$, then the lemma simply states
    that $\MinTen{k}{0}{1} \equiv 0$.

    Moreover, we obtain a representation of $\TenCM{k}{r}{s - 2}{1}(P, \R^{n})$ by Minkowski tensors, as Lemma~\ref{Lem_McM_MT_gtCM} yields
    \begin{align*}
      \tfrac{k}{2\pi} \TenCM{k}{r}{s - 2}{1}(P, \R^{n}) & = \sum_{F \in \cF_{k}(P)} Q(F) \Upsilon_{r}(F) \Theta_{s - 2}(P, F) \\
      & = Q \MinTen{k}{r}{s - 2}(P) - 2 \pi s \, \MinTen{k}{r}{s}(P) + \sum_{G \in \cF_{k + 1}(P)} Q(G) \Upsilon_{r - 1}(G) \Theta_{s - 1}(P, G),
    \end{align*}
    where the final summation with respect to $G$ equals $\TenCM{k + 1}{r - 1}{s - 1}{1}(P, \R^{n})$.
    Hence, we obtain recursively
    \begin{align}
      \tfrac{k}{2\pi} \TenCM{k}{r}{s - 2}{1}(P, \R^{n}) & = Q \MinTen{k}{r}{s - 2}(P) + Q \MinTen{k + 1}{r - 1}{s - 1}(P) - 2 \pi s \, \MinTen{k}{r}{s}(P) - 2 \pi (s + 1) \, \MinTen{k + 1}{r - 1}{s + 1}(P) \nonumber \\
      & \qquad + \sum_{G \in \cF_{k + 1}(P)} Q(G) \Upsilon_{r - 2}(G) \Theta_{s}(P, G) \nonumber \\
      & = \sum_{p = 0}^{r} \bigg( Q \MinTen{k + p}{r - p}{s + p - 2}(P) - 2 \pi (s + p) \, \MinTen{k + p}{r - p}{s + p}(P) \bigg), \label{Form_McM_MT_gtCM}
    \end{align}
    where the sum with respect to $p$ effectively goes up to $\min\{ r, n - k \}$.
    This representation has also been derived in \cite[Proposition 2.1]{SvanJens16}
    (more generally, for sets with positive reach). Note that these relations extend to general convex bodies (and then also to polyconvex sets), since
    all functionals involved can be continuously extended to general convex bodies (and are additive) as shown in
    \cite{HugSchneider14, HugSchneider17}.

  \section{The proofs} \label{Sec_Proofs}

    \subsection{The Proofs of the Kinematic Formulae}

      The proofs of the kinematic formulae are applications of the kinematic formulae for tensorial curvature measures (obtained in \cite{HugWeis16c}). Although we could prove Theorem \ref{Thm_KF_MT} directly without  first establishing  Theorem \ref{Thm_KF_tiMT}, we start with the latter in order to emphasize the difference in the coefficients of the appearing translation invariant and the general Minkowski tensors. Since this allows us to simply recall part of the argument, it does not lead to undue repetitions in the proof of Theorem \ref{Thm_KF_MT}.

      \begin{proof}[Proof of Theorem \ref{Thm_KF_tiMT}]
        It is sufficient to prove the assertion for polytopes $P, P' \in \cP^{n}$. The general case follows by a straightforward approximation argument. We denote the integral under investigation by $I$.
        Then Theorem \ref{Thm_KF_tCM} (see \cite[Theorem 4]{HugWeis16b}) with $\beta = \beta' = \R^{n}$ yields
        \begin{align*}
          I & = \sum_{k = j + 1}^{n - 1} \bigg( \sum_{m = 0}^{\lfloor \frac s 2 \rfloor} c_{n, j, k}^{s, 0, m} \, Q^{m} \MinTen{k}{0}{s - 2m} (P) + \sum_{m = 1}^{\lfloor \frac s 2 \rfloor} c_{n, j, k}^{s, 1, m} \, Q^{m - 1} \TenCM{k}{0}{s - 2m}{1} (P, \R^{n}) \bigg) \IVol{n - k + j}{0}{0}(P') \\
          & \qquad + \MinTen{j}{0}{s} (P) \, \IVol{n}{0}{0}(P') + e_{n, j}^{s} \, Q^{\frac s 2} \Phi_n^0(P) \, \IVol{j}{0}{0}(P'),
        \end{align*}
        since $c_{n, j, k}^{s, 1, 0}=0$.
        We conclude from \eqref{Form_McM_MT_gtCM} that
        \begin{align*}
          \TenCM{k}{0}{s - 2m}{1} (K, \R^{n}) = \frac{2 \pi}{k} \, Q \MinTen{k}{0}{s - 2m} (P) - \frac{4 \pi^{2}}{k} (s - 2m + 2) \, \MinTen{k}{0}{s - 2m + 2} (P).
        \end{align*}
        Hence, we get
        \begin{align*}
          I & = \sum_{k = j + 1}^{n - 1} \bigg( \sum_{m = 0}^{\lfloor \frac s 2 \rfloor} c_{n, j, k}^{s, 0, m} \, Q^{m} \MinTen{k}{0}{s - 2m} (P) + \sum_{m = 1}^{\lfloor \frac s 2 \rfloor} \frac{2 \pi}{k} c_{n, j, k}^{s, 1, m} \, Q^{m} \MinTen{k}{0}{s - 2m} (P) \\
          & \qquad - \sum_{m = 0}^{\lfloor \frac s 2 \rfloor - 1} \frac{4 \pi^{2}}{k} (s - 2m) c_{n, j, k}^{s, 1, m + 1} \, Q^{m} \MinTen{k}{0}{s - 2m} (P) \bigg) \IVol{n - k + j}{0}{0}(P') \\
          & \qquad + \MinTen{j}{0}{s} (P) \, \IVol{n}{0}{0}(P') + e_{n, j}^{s}  \, Q^{\frac s 2} \, \IVol{n}{0}{0}(P) \, \IVol{j}{0}{0}(P').
        \end{align*}
        Combining all the sums with respect to $m$ gives
        \begin{align*}
          I & = \sum_{k = j + 1}^{n - 1} \sum_{m = 0}^{\lfloor \frac s 2 \rfloor} \left( c_{n, j, k}^{s, 0, m} + \frac{2 \pi}{k} c_{n, j, k}^{s, 1, m} - \frac{4 \pi^{2}}{k} (s - 2m) c_{n, j, k}^{s, 1, m + 1} \right) Q^{m} \MinTen{k}{0}{s - 2m} (P) \, \IVol{n - k + j}{0}{0} (P') \\
          & \qquad + \MinTen{j}{0}{s} (P) \, \IVol{n}{0}{0}(P') + e_{n, j}^{s}  Q^{\frac s 2} \, \IVol{n}{0}{0}(P) \, \IVol{j}{0}{0}(P'),
        \end{align*}
        which holds as $c_{n, j, k}^{s, 1, 0} = 0$ and, for $m = \lfloor \frac s 2 \rfloor$, either $(s - 2m) = 0$ (if $s$ is even) or $\MinTen{k}{0}{s - 2m} \equiv 0$ (if $s$ is odd).
        Now we simplify the occurring coefficients
        \begin{align*}
           e_{n, j, k}^{s, m, 0} & := c_{n, j, k}^{s, 0, m} + \frac{2 \pi}{k} c_{n, j, k}^{s, 1, m} - \frac{4 \pi^{2}}{k} (s - 2m) c_{n, j, k}^{s, 1, m + 1} \\
          & \phantom{:} = \frac{1}{(4\pi)^{m} m!} \frac {\Gamma(\frac {k} 2 + 1)} {\Gamma(\frac {k + s} 2 + 1)} \alpha_{n, j, k} \bigg( \left( 1 + \frac{2m}{k} \right) \frac {\Gamma(\frac {j + s} 2 - m + 1)} {\Gamma(\frac {j} 2 + 1)} \frac {\Gamma(\frac {k - j} 2 + m)} {\Gamma(\frac{k - j}{2})} \\
          & \qquad \qquad \qquad \qquad \qquad \qquad \qquad - \frac{s - 2m}{k} \frac {\Gamma(\frac {j + s} 2 - m)} {\Gamma(\frac {j} 2 + 1)} \frac {\Gamma(\frac {k - j} 2 + m + 1)} {\Gamma(\frac{k - j}{2})} \bigg) \\
          & \phantom{:} = \frac{1}{(4\pi)^{m} m!} \frac {\Gamma(\frac {k} 2)} {\Gamma(\frac {j} 2)} \frac {\Gamma(\frac {j + s} 2 - m)} {\Gamma(\frac {k + s} 2)} \frac {\Gamma(\frac {k - j} 2 + m)} {\Gamma(\frac{k - j}{2})} \alpha_{n, j, k},
        \end{align*}
        for $0\le j < k < n$ and $m = 0, \ldots, \lfloor \frac s 2 \rfloor - 1$. Altough it is irrelevant here, as explained before, we define the coefficients for $m = \lfloor \frac s 2 \rfloor$ in a slightly different way by
        \begin{align*}
          e_{n, j, k}^{s, \lfloor \frac s 2 \rfloor, 0} & := c_{n, j, k}^{s, 0, \lfloor \frac s 2 \rfloor} + \frac{2 \pi}{k} c_{n, j, k}^{s, 1, \lfloor \frac s 2 \rfloor} \\
          & \phantom{:} = \frac{\frac{k} 2 + \lfloor \frac s 2 \rfloor}{(4\pi)^{\lfloor \frac s 2 \rfloor} \lfloor \frac s 2 \rfloor!} \frac {\Gamma(\frac {k} 2)} {\Gamma(\frac {j} 2 + 1)} \frac {\Gamma(\frac {j + s} 2 - \lfloor \frac s 2 \rfloor + 1)} {\Gamma(\frac {k + s} 2 + 1)} \frac {\Gamma(\frac {k - j} 2 + \lfloor \frac s 2 \rfloor)} {\Gamma(\frac{k - j}{2})} \alpha_{n, j, k},
        \end{align*}
        which turns out to be appropriate in view of Theorem \ref{Thm_KF_MT}. For even $s$ this coincides with the general definition, whereas for odd $s$ these
        two expressions are different.

        The continuation of the coefficients to $k = j\ge 1$ is given by $e_{n, j, j}^{s, m, 0} = \1 \{ m = 0 \}$ (if $j=k=0$ this is true by definition), and to $k=n$ and for even $s$ it is given by
        $$
         e_{n,j , n}^{s, \lfloor\frac{s}{2} \rfloor, 0}=e_{n, j}^{s}.
        $$
        Hence, we can briefly write
        \begin{align*}
          I & = \sum_{k = j}^{n} \sum_{m = 0}^{\lfloor \frac s 2 \rfloor} e_{n, j, k}^{s, m, 0} \, Q^{m} \MinTen{k}{0}{s - 2m} (P) \, \IVol{n - k + j}{0}{0} (P'),
        \end{align*}
        since $\MinTen{n}{0}{s - 2m}$ vanishes if $m \neq \frac s 2$.
      \end{proof}

      In the proof of the general case, we observe that the coefficients of the translation invariant Minkowski tensors are the same as the ones which we derived in Theorem \ref{Thm_KF_tiMT}. However, the coefficients of the other Minkowski tensors have to be defined in a slightly different way.

      \begin{proof}[Proof of Theorem \ref{Thm_KF_MT}]
        Again, it is sufficient to prove the assertion for polytopes $P, P' \in \cP^{n}$. The general case follows by an approximation argument. We denote the integral under investigation by~$I$.
        Then Theorem \ref{Thm_KF_tCM} with $\beta =  \R^{n}$ and $\beta' = \R^{n}$ yields, as in the proof of Theorem \ref{Thm_KF_tiMT},
        \begin{align*}
          I & = \sum_{k = j + 1}^{n - 1} \bigg( \sum_{m = 0}^{\lfloor \frac s 2 \rfloor} c_{n, j, k}^{s, 0, m} \, Q^{m} \MinTen{k}{r}{s - 2m} (P) + \sum_{m = 1}^{\lfloor \frac s 2 \rfloor} c_{n, j, k}^{s, 1, m} \, Q^{m - 1} \TenCM{k}{r}{s - 2m}{1} (P, \R^{n}) \bigg) \IVol{n - k + j}{0}{0}(P') \\
          & \qquad + \MinTen{j}{r}{s} (P) \, \IVol{n}{0}{0}(P') + e_{n, j}^{s}  \, Q^{\frac s 2} \MinTen{n}{r}{0}(P) \, \IVol{j}{0}{0}(P').
        \end{align*}
        We conclude from \eqref{Form_McM_MT_gtCM}
        \begin{align*}
          I & = \sum_{k = j + 1}^{n - 1} \sum_{m = 0}^{\lfloor \frac s 2 \rfloor} c_{n, j, k}^{s, 0, m} \, Q^{m} \MinTen{k}{r}{s - 2m} (P) \, \IVol{n - k + j}{0}{0}(P') \\
          & \qquad + \sum_{k = j + 1}^{n - 1} \frac{2\pi}{k} \sum_{p = 0}^{r} \bigg( \sum_{m = 1}^{\lfloor \frac s 2 \rfloor} c_{n, j, k}^{s, 1, m} \, Q^{m} \MinTen{k + p}{r - p}{s - 2m + p}(P) \\
          & \qquad \qquad \qquad - 2\pi \sum_{m = 1}^{\lfloor \frac s 2 \rfloor} (s - 2m + p + 2) c_{n, j, k}^{s, 1, m} \, Q^{m - 1} \MinTen{k + p}{r - p}{s - 2m + p + 2}(P) \bigg) \IVol{n - k + j}{0}{0}(P') \\
          & \qquad + \MinTen{j}{r}{s} (P) \, \IVol{n}{0}{0}(P') + e_{n, j}^{s} \, Q^{\frac s 2} \MinTen{n}{r}{0}(P) \, \IVol{j}{0}{0}(P').
        \end{align*}
        For $p = 0$, we can combine all of the coefficients as in the proof of the translation invariant case in Theorem \ref{Thm_KF_tiMT}. For this purpose it is important that the definition of the coefficients for $m = \frac{s - 1} 2$ (if $s$ is odd) differs from the general ones, as for $r > 0$, we have $\MinTen{k}{r}{1} \not \equiv 0$ in general. Thus, we obtain
        \begin{align*}
          I & = \sum_{k = j}^{n} \sum_{m = 0}^{\lfloor \frac s 2 \rfloor} e_{n, j, k}^{s, m, 0} \, Q^{m} \MinTen{k}{r}{s - 2m} (P) \, \IVol{n - k + j}{0}{0}(P') \\
          & \quad + \sum_{k = j + 1}^{n - 1} \frac{2\pi}{k} \sum_{p = 1}^{r} \bigg( \sum_{m = 0}^{\lfloor \frac s 2 \rfloor - 1} \big( c_{n, j, k}^{s, 1, m}  - 2\pi (s - 2m + p) c_{n, j, k}^{s, 1, m + 1} \big) \, Q^{m} \MinTen{k + p}{r - p}{s - 2m + p}(P) \\
          & \qquad \qquad \qquad \qquad \qquad + c_{n, j, k}^{s, 1, \lfloor \frac s 2 \rfloor} \, Q^{\lfloor \frac s 2 \rfloor} \MinTen{k + p}{r - p}{s - 2\lfloor \frac s 2 \rfloor + p}(P) \bigg) \IVol{n - k + j}{0}{0}(P'),
        \end{align*}
        as $c_{n, j, k}^{s, 1, 0} = 0$.
        Now we rename the remaining coefficients as
        \begin{align*}
          e_{n, j, k}^{s, m, p} & := \tfrac{2\pi}{k} \left( c_{n, j, k}^{s, 1, m}  - 2\pi (s - 2m + p) c_{n, j, k}^{s, 1, m + 1} \right),
        \end{align*}
        which we simplify via
        \begin{align*}
          e_{n, j, k}^{s, m, p} & = \frac{1}{(4\pi)^{m} m!} \frac {\Gamma(\frac {k} 2 + 1)} {\Gamma(\frac {j} 2 + 1)} \frac {\Gamma(\frac {j + s} 2 - m)} {\Gamma(\frac {k + s} 2 + 1)} \frac {\Gamma(\frac {k - j} 2 + m)} {\Gamma(\frac{k - j}{2})} \alpha_{n, j, k} \\
          & \qquad \qquad \times \underbrace{\tfrac{2}{k} \left( m \left( \tfrac{j + s}{2} - m \right) - \tfrac{s - 2m + p}{2} \left( \tfrac{k - j}{2} + m \right) \right)}_{ = m \frac{k - p}{k} - \frac{s + p}{2} \frac{k - j}{k}},
        \end{align*}
        and denote $e_{n, j, k}^{s, \lfloor \frac s 2 \rfloor, p} := \frac{2\pi}{k} c_{n, j, k}^{s, 1, \lfloor \frac s 2 \rfloor}$, for $p > 0$, which gives the assertion.
      \end{proof}

    \subsection{The Proofs of the Crofton Formulae}

      The connection of the Crofton formula and the kinematic formula (for the scalar (local) case, see the proof of Theorem~4.4.5 in  \cite{Schneider14}, for the tensorial (local) case, see the proofs of Theorem~1 and Theorem~4 in \cite{HugWeis16c}) cannot be used to derive the Crofton formulae for Minkowski tensors, as this requires local arguments which are only available for the measure-valued valuations.
      However, we can prove the Crofton formulae by globalizing the corresponding Crofton formulae for the global curvature measures and by a subsequent application of Lemma \ref{Lem_McM_MT_gtCM} to these results.
      Again, we treat the translation invariant and the general case separately, and start with the former.

      \begin{proof}[Proof of Theorem \ref{Thm_CF_tiMT_j<k}]
        We establish the assertion for a polytope $P \in \cP^{n}$.  Let us denote the integral under investigation by $I$.
        Then Theorem~\ref{Thm_CF_tCM_j<k} (see \cite[Theorem~4]{HugWeis16c}) with $\beta = \R^{n}$ and $r=0$ yields
        \begin{align*}
          I & = \sum_{m = 0}^{\lfloor \frac s 2 \rfloor} c_{n, j, n - k + j}^{s, 0, m} \, Q^{m} \MinTen{n - k + j}{0}{s - 2m}(P) + \sum_{m = 1}^{\lfloor \frac s 2 \rfloor} c_{n, j, n - k + j}^{s, 1, m} \, Q^{m - 1} \TenCM{n - k + j}{0}{s - 2m}{1} (P, \R^{n}).
        \end{align*}
        As in the preceding proofs, we obtain from \eqref{Form_McM_MT_gtCM}
        \begin{align*}
          I & = \sum_{m = 0}^{\lfloor \frac s 2 \rfloor} c_{n, j, n - k + j}^{s, 0, m} \, Q^{m} \MinTen{n - k + j}{0}{s - 2m}(P) + \tfrac{2 \pi}{n - k + j} \sum_{m = 1}^{\lfloor \frac s 2 \rfloor} c_{n, j, n - k + j}^{s, 1, m} \, Q^{m} \MinTen{n - k + j}{0}{s - 2m} (P) \\
          & \qquad - \tfrac{4 \pi^{2}}{n - k + j} \sum_{m = 1}^{\lfloor \frac s 2 \rfloor} (s - 2m + 2) c_{n, j, n - k + j}^{s, 1, m} \, Q^{m - 1} \MinTen{n - k + j}{0}{s - 2m + 2} (P).
        \end{align*}
        Comparing this to the proof of Theorem \ref{Thm_KF_tiMT}, we observe, that we simply obtain the same coefficients in a different order. Thus, the proof is already complete.
      \end{proof}

      In the proof of the Crofton formulae for general Minkowski tensors, we observe that the coefficients of the translation invariant Minkowski tensors are the same as the ones which we derived in Theorem \ref{Thm_KF_tiMT}. However, the coefficients of the other Minkowski tensors have to be defined in a slightly different way.

      \begin{proof}[Proof of Theorem \ref{Thm_CF_MT_j<k}]
      We only prove the assertion for a polytope $P \in \cP^{n}$ and refer to an approximation argument for the case of general convex bodies. We denote the integral under investigation by $I$.
      Then Theorem~\ref{Thm_CF_tCM_j<k} (see \cite[Theorem~4]{HugWeis16c}) with $\beta = \R^{n}$ yields
      \begin{align*}
        I & = \sum_{m = 0}^{\lfloor \frac s 2 \rfloor} c_{n, j, n - k + j}^{s, 0, m} \, Q^{m} \MinTen{n - k + j}{r}{s - 2m}(P) + \sum_{m = 1}^{\lfloor \frac s 2 \rfloor} c_{n, j, n - k + j}^{s, 1, m} \, Q^{m - 1} \TenCM{n - k + j}{r}{s - 2m}{1} (P, \R^{n}).
      \end{align*}
      As before, we conclude from \eqref{Form_McM_MT_gtCM}
      \begin{align*}
        I & = \sum_{m = 0}^{\lfloor \frac s 2 \rfloor} c_{n, j, n - k + j}^{s, 0, m} \, Q^{m} \MinTen{n - k + j}{r}{s - 2m}(P) + \tfrac{2\pi}{n - k + j} \sum_{m = 1}^{\lfloor \frac s 2 \rfloor} \sum_{p = 0}^{r} c_{n, j, n - k + j}^{s, 1, m} \, Q^{m} \MinTen{n - k + j + p}{r - p}{s - 2m + p}(P) \\
        & \qquad - \tfrac{4\pi^{2}}{n - k + j} \sum_{m = 1}^{\lfloor \frac s 2 \rfloor} \sum_{p = 0}^{r} c_{n, j, n - k + j}^{s, 1, m} (s - 2m + p + 2) \, Q^{m - 1}\MinTen{n - k + j + p}{r - p}{s - 2m + p + 2}(P).
      \end{align*}
      Similarly to the preceding proof, we observe that we obtain the same coefficients as in the proof of Theorem \ref{Thm_KF_MT}, which concludes the argument.
    \end{proof}

  \appendix
  \numberwithin{equation}{section}
  \numberwithin{Satz}{section}

  \section{Integral formulae for tensorial curvature measures} \label{Sec_Appendix}

    In this section, we give a brief definition of the (generalized) tensorial curvature measures, which are applied in this work. Furthermore, we state the kinematic and Crofton formulae which involve the tensorial curvature measures (see~ \cite{HugWeis16b, HugWeis16c}). These form the basis of the proofs of the corresponding integral formulae for the Minkowski tensors.

  \subsection{The (generalized) tensorial curvature measures}

    First, we recall the definitions of the (generalized) tensorial curvature measures. The \textit{tensorial curvature measures} are tensor-valued generalizations of the curvature measures  and thus local versions of the Minkowski tensors (which in turn are tensor-valued generalizations of the intrinsic volumes). For $K \in \cK^{n}$, they are defined  as the Borel measures $ \TenCM{j}{r}{s}{0} (K, \cdot)$, $r,s\in\N_0$,  on $\mathcal{B}(\R^n)$ which are given by
    \begin{equation*}
      \TenCM{j}{r}{s}{0} (K, \beta):=  \frac{1}{r! s!} \frac{\omega_{n - j}}{\omega_{n - j + s}} \int _{\beta \times \Sn} x^r u^s \, \Lambda_j(K, \intd (x, u)),
    \end{equation*}
    for $j\in\{0,\ldots,n-1\}$ and $\beta \in \cB(\R^{n})$. In addition, we define
    $$
    \phi_n^{r,0}(K,\beta):=\phi_n^{r}(K,\beta):=\frac{1}{r!}\int_\beta x^r\, \mathcal{H}^n(\intd x).
    $$
    Hence, $\Phi_n^{r}(K)=\phi_n^r(K,\R^n)$ and, in particular, $\Phi_n^{0}(K)=\phi_n^0(K,\R^n)=V_n(K)$. Also note that
    $\phi_j^{0,0,0}(K,\cdot)=\phi_j(K,\cdot)$ is the $j$th curvature measure of $K$ and $\phi_j(K,\R^n)=V_j(K)$.

    Obviously, the total tensorial curvature measures are just the Minkowski tensors. A modification of the representation \eqref{Form_MT_Pol} can be used to define the \emph{generalized tensorial curvature measure}
		$$
		\TenCM{j}{r}{s}{1} (P, \cdot) ,\qquad j \in\{0, \ldots, n - 1\}, \, r, s \in \N_{0},
		$$
    of a polytope $P\in\cP^n$ as the Borel measure on $\mathcal{B}(\R^n)$ which is given by
    \begin{equation*}
      \TenCM{j}{r}{s}{1} (P, \beta) :=\frac{2\pi}{j} \frac{1}{r! s!} \frac{1}{\omega_{n - j + s}}
			\sum_{F \in \cF_{j}(P)} Q(F) \int _{F \cap \beta} x^r \, \cH^{j}(\intd x)
			\int _{N(P,F) \cap \Sn} u^{s} \, \cH^{n - j - 1} (\intd u),
    \end{equation*}
    for $\beta \in \cB(\R^{n})$. For the just defined generalized tensorial curvature measures, there also exist continuous extensions to $\cK^{n}$ (as explained in \cite[Section 2]{HugWeis16b} and according to \cite{HugSchneider14,HugSchneider17}).
    For more details on the (generalized) tensorial curvature measures see \cite[Section~2]{HugWeis16b}.

  \subsection{Integral formulae for the tensorial curvature measures}

    Next, we state the integral formulae, which we need in the proofs in Section \ref{Sec_Proofs}. We start with the kinematic formula for tensorial curvature measures (see \cite[Theorem 4]{HugWeis16b}).

    \begin{Theorem} \label{Thm_KF_tCM}
      For $K, K' \in \cK^n$, $\beta, \beta' \in \cB(\R^n)$ and $j, r, s \in \N_0$ with $j \leq n$,
      \begin{align*}
        &\int_{\GOp_n} \TenCM{j}{r}{s}{0} (K \cap g K', \beta \cap g \beta') \, \mu( \intd g) \\
        & \qquad = \sum_{k = j + 1}^{n - 1} \sum_{m = 0}^{\lfloor \frac s 2 \rfloor} \sum_{i = 0}^{1} c_{n, j, k}^{s, i, m} \, Q^{m - i} \TenCM{k}{r}{s - 2m}{i} (K, \beta) \phi_{n-k+j}(K', \beta') \\
        & \qquad \qquad + \TenCM{j}{r}{s}{0} (K, \beta) \phi_n(K', \beta')
          + e_{n, j}^{s} \, Q^{ \frac s 2 }\phi_n^r (K, \beta) \phi_j (K', \beta'),
      \end{align*}
      where
      \begin{align*}
        c_{n, j, k}^{s, i, m} : = \frac{1}{(4\pi)^{m} m!} \frac{\binom{m}{i}}{\pi^{i}} \frac {\Gamma(\frac {k} 2 + 1)} {\Gamma(\frac {j} 2 + 1)} \frac {\Gamma(\frac {j + s} 2 - m + 1)} {\Gamma(\frac {k+s} 2 + 1)} \frac {\Gamma(\frac {k - j} 2 + m)} {\Gamma(\frac{k - j}{2})} \alpha_{n, j, k}
      \end{align*}
      with $\alpha_{n, j, k}$ as in \eqref{Form_Princ_KF} and
      \begin{align*}
        e_{n, j}^{s} & := \1 \{ s \text{ even} \} \frac{1}{(2\pi)^{s} \frac{s}{2}!} \frac{\Gamma(\frac{n - j + s}{2})}{\Gamma(\frac{n - j}{2})} \frac{\omega_{n + s}}{\omega_{n}}.
      \end{align*}
    \end{Theorem}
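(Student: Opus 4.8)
To prove Theorem~\ref{Thm_KF_tCM}, the plan is to reduce to polytopes and then carry out explicit translative integral geometry on their faces. Both sides of the asserted identity depend continuously on $K$ and $K'$ with respect to the Hausdorff metric, weakly continuously on the Borel sets $\beta,\beta'$, and additively; hence, approximating convex bodies by polytopes and using the weak continuity of the support measures, it suffices to establish the formula for polytopes $P,P'\in\cP^{n}$, for which one may use the explicit face representations of $\TenCM{j}{r}{s}{0}$ and $\TenCM{j}{r}{s}{1}$ recalled in Section~\ref{secprep}.

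For polytopes I would invoke the standard description of $P\cap gP'$ valid for $\mu$-almost all $g\in\GOp_{n}$: its $j$-faces are exactly the sets $F\cap gF'$ with $F\in\cF_{k}(P)$, $gF'\in\cF_{k'}(gP')$ and $k+k'-n=\dim(F\cap gF')\ge j$, the Hausdorff measure on such a face factorizes, and the associated normal cone decomposes as $N(P\cap gP',F\cap gF')=N(P,F)+gN(P',F')$, which for generic $g$ is the direct sum of the $(n-k)$- and $(n-k')$-dimensional cones inside the $(n-j)$-dimensional normal space. Substituting the face representation of $\TenCM{j}{r}{s}{0}(P\cap gP',\beta\cap g\beta')$ turns the left-hand side into a sum over pairs $(F,F')$ of an integral over $\GOp_{n}$ of the product of a moment of $x^{r}$ over $F\cap gF'\cap\beta\cap g\beta'$ and a moment of $u^{s}$ over $(N(P,F)+gN(P',F'))\cap\Sn$.

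Next I would disintegrate the Haar measure as $\intd g=\intd\vartheta\,\intd t$ with $\vartheta\in\SO(n)$ and $t\in\R^{n}$ and perform the $t$-integration first. By a Fubini argument (the translative formula for moments over intersections of affine flats), integrating the moment of $x^{r}$ over $F\cap(\vartheta F'+t)\cap\beta\cap(\vartheta\beta'+t)$ against $\intd t$ splits into the product of the $\Upsilon_{r}$-type moment of $x^{r}$ over $F\cap\beta$ and the scalar volume of $\vartheta(F'\cap\beta')$, weighted by a subspace determinant of $F^{0}$ and $\vartheta (F')^{0}$; in particular the intersection carries positive measure only in the expected dimension, which forces $k'=n-k+j$ and, after summing over the faces $F'$ of $P'$ together with their external angles, produces $\CM{n-k+j}(P',\cdot)$ as the $P'$-factor, while the extreme cases $k'=n$ and $k=n$ yield the two special summands involving $\CM{n}$ and $\phi_{n}^{r}$. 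The $u^{s}$-moment over the normal cone is untouched by the translation, so what remains is the rotational average
\[
\int_{\SO(n)}[F^{0},\vartheta (F')^{0}]\int_{(N(P,F)+\vartheta N(P',F'))\cap\Sn}u^{s}\,\cH^{n-j-1}(\intd u)\,\intd\vartheta .
\]

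In this last integral the unit normal $u$ splits, according to the direct-sum decomposition of the $(n-j)$-dimensional normal space, into a part ranging over $N(P,F)$ in the $(n-k)$-dimensional space $(F^{0})^{\perp}$ and a part ranging over $\vartheta N(P',F')$ in a $(k-j)$-dimensional space. Expanding the symmetric power $u^{s}$ and integrating over $\SO(n)$ annihilates the non-invariant components and symmetrizes the remaining ones, leaving explicit polynomials in $Q$ and in $Q((F^{0})^{\perp})$ whose coefficients are ratios of surface areas of spheres of the relevant dimensions, hence ratios of Gamma functions; writing $Q((F^{0})^{\perp})=Q-Q(F^{0})$ separates each contribution into an ordinary part ($i=0$, carrying $Q^{m}$ and $\TenCM{k}{r}{s-2m}{0}(P,\beta)$) and a generalized part ($i=1$, carrying $Q^{m-1}$ and $\TenCM{k}{r}{s-2m}{1}(P,\beta)$, the latter absorbing the $Q(F)$ factor), and the rotational average of the weight $[F^{0},\vartheta (F')^{0}]$ contributes the classical constant $\alpha_{n,j,k}$. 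Matching the resulting products of Gamma factors, the binomial coefficients $\binom{m}{i}$ and the powers of $\pi$ to the stated closed forms of $c_{n,j,k}^{s,i,m}$ and $e_{n,j}^{s}$, and checking the vanishing for odd $s$ and the degenerate boundary cases ($j=0$ and $k=n$), completes the argument. I expect the evaluation and bookkeeping of these $\SO(n)$-integrals of tensor powers of unit vectors lying in subspaces in random relative position — and the clean separation into the $i=0$ and $i=1$ families — to be the main obstacle; the reduction to polytopes and the translative $t$-integration are comparatively routine.
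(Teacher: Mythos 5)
This theorem is not proved in the paper at all: it is quoted verbatim from \cite[Theorem 4]{HugWeis16b} and serves only as input for the results that the paper does prove. Your outline does match the strategy used in that reference (and, before it, in \cite{HugSchnSchu08a}): reduction to polytopes by additivity and weak continuity, the generic face decomposition of $P\cap gP'$ with $N(P\cap gP',F\cap gF')=N(P,F)+gN(P',F')$, splitting $\mu$ into a translative and a rotational part, and performing the $t$-integration first via Fubini, which correctly forces $k'=n-k+j$ and produces the factor $\CM{n-k+j}(P',\beta')$ together with the two boundary summands.

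However, as a proof the proposal has a genuine gap exactly where you say you expect the main obstacle: the rotational mean value
\[
\int_{\SO(n)}[F^{0},\vartheta (F')^{0}]\int_{(N(P,F)+\vartheta N(P',F'))\cap\Sn}u^{s}\,\cH^{n-j-1}(\intd u)\,\nu(\intd\vartheta)
\]
is asserted, not computed, and it is precisely this integral that encodes the entire content of the theorem (the coefficients $c_{n,j,k}^{s,i,m}$ with the factor $\binom{m}{i}/\pi^{i}$, and the fact that the generalized measures $\TenCM{k}{r}{s-2m}{1}$ appear at all). Two points in your sketch are too optimistic. First, the determinant $[F^{0},\vartheta(F')^{0}]$ and the normal-cone integral are \emph{not} independent functions of $\vartheta$, so the constant $\alpha_{n,j,k}$ does not simply ``factor out'' of the rotational average; one needs the dedicated mean-value lemma of \cite{HugWeis16b} (building on Hug--Schneider--Schuster), which evaluates the coupled integral in closed form as a polynomial in $Q$ and $Q(F^{0})$ times $\int_{N(P,F)\cap\Sn}u^{s-2m}\,\cH^{n-k-1}(\intd u)$. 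Second, a unit vector $u$ in the joined cone does not decompose into independent unit vectors in the two summand cones; one has to pass to spherical coordinates on the join (an angle $\theta$ plus unit vectors in each cone), and the resulting Beta-type integrals in $\theta$ are the source of the Gamma-quotients in $c_{n,j,k}^{s,i,m}$. Without carrying out this computation --- or at least citing it --- the proof establishes only the qualitative shape of the right-hand side, not the stated coefficients, and it is the coefficients that the rest of the paper relies on.
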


    Stating the Crofton formulae, we distinguish the cases $k = j$ and $j < k$ (see \cite[Theorem~1 and Theorem~4]{HugWeis16c}).

    \begin{Theorem} \label{Thm_CF_tCM_j=k}
      Let $P \in \cP^n$, $\beta \in \cB(\R^n)$ and $k, r, s, l \in \N_{0}$ with $k \leq n$. Then,
      \begin{align*}
        \int_{\AOp(n, k)} \TenCM{k}{r}{s}{0} (P \cap E, \beta \cap E) \, \mu_k( \intd E) = e_{n, j}^{s} \, Q^{\frac s 2}\phi_n^r(P, \beta),
      \end{align*}
      where $e_{n, j}^{s}$ is defined as in Theorem \ref{Thm_KF_tCM}.
    \end{Theorem}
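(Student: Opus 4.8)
The plan is to establish the identity directly for a polytope $P\in\cP^n$. The case $k=n$ is trivial: then $\AOp(n,n)=\{\R^n\}$, the coefficient satisfies $e_{n,n}^s=\1\{s=0\}$ (since $\Gamma(0)^{-1}=0$), and both sides collapse to $\1\{s=0\}\,\phi_n^r(P,\beta)$; so I would assume $0\le k\le n-1$. The crucial geometric observation is that for $\mu_k$-almost every $E\in\AOp(n,k)$ the slice $M:=P\cap E$ is a polytope of full dimension $k$ inside $E$, whose unique $k$-face is $M$ itself with normal cone $E^\perp$. Hence, by the explicit polytope representation of the support measures recalled in Section~\ref{secPre}, $\Lambda_k(M,\cdot)$ is concentrated on $M\times(E^\perp\cap\Sn)$, where it equals the product of $\omega_{n-k}^{-1}\,\cH^k$ on $M$ and $\cH^{n-k-1}$ on $E^\perp\cap\Sn$ (on the exceptional set where $\dim M<k$ both sides of the asserted identity vanish). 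Substituting this into the definition of $\TenCM{k}{r}{s}{0}$ gives
\begin{align*}
 \TenCM{k}{r}{s}{0}(P\cap E,\beta\cap E)=\frac{1}{r!\,s!}\,\frac{1}{\omega_{n-k+s}}\left(\int_{P\cap E\cap\beta}x^r\,\cH^k(\intd x)\right)\left(\int_{E^\perp\cap\Sn}u^s\,\cH^{n-k-1}(\intd u)\right).
\end{align*}

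Next I would integrate over $E$ by means of the disintegration
\begin{align*}
 \int_{\AOp(n,k)}(\,\cdot\,)\,\mu_k(\intd E)=\int_{\GOp(n,k)}\int_{L^\perp}(\,\cdot\,)(L+y)\,\cH^{n-k}(\intd y)\,\nu_k(\intd L),
\end{align*}
where $\nu_k$ is the rotation invariant probability measure on the Grassmannian $\GOp(n,k)$. The spherical factor above depends only on the direction space $L=E^0$, not on the translate $y$, so it may be pulled out of the innermost integration; Fubini's theorem for the orthogonal splitting $\R^n=L\oplus L^\perp$, applied to $x\mapsto\1_\beta(x)\,x^r$, turns the remaining inner integral into $\int_{P\cap\beta}x^r\,\cH^n(\intd x)=r!\,\phi_n^r(P,\beta)$. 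This reduces the integral to
\begin{align*}
 \int_{\AOp(n,k)}\TenCM{k}{r}{s}{0}(P\cap E,\beta\cap E)\,\mu_k(\intd E)=\frac{\phi_n^r(P,\beta)}{s!\,\omega_{n-k+s}}\int_{\GOp(n,k)}\left(\int_{L^\perp\cap\Sn}u^s\,\cH^{n-k-1}(\intd u)\right)\nu_k(\intd L).
\end{align*}

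The last step is a purely spherical computation. Averaging the measure $\cH^{n-k-1}$ on $L^\perp\cap\Sn$ over $L\sim\nu_k$ produces a rotation invariant measure on $\Sn$, hence a scalar multiple of $\cH^{n-1}$ on $\Sn$; matching total masses (each $\cH^{n-k-1}(L^\perp\cap\Sn)=\omega_{n-k}$) identifies the scalar as $\omega_{n-k}/\omega_n$. Thus the double integral above equals $\tfrac{\omega_{n-k}}{\omega_n}\int_{\Sn}u^s\,\cH^{n-1}(\intd u)$, which vanishes for odd $s$ by the symmetry $u\mapsto-u$ (in accordance with $e_{n,k}^s=0$), while for $s=2m$ the standard moment identity $\int_{\Sn}u^{2m}\,\cH^{n-1}(\intd u)=\tfrac{(2m)!}{(4\pi)^m m!}\,\omega_{n+2m}\,Q^m$ evaluates it. Collecting the factors and simplifying with $\omega_{n-k}/\omega_{n-k+2m}=\pi^{-m}\Gamma(\tfrac{n-k}{2}+m)/\Gamma(\tfrac{n-k}{2})$ and $\omega_{n+2m}/\omega_n=\pi^{m}\Gamma(\tfrac n2)/\Gamma(\tfrac n2+m)$ then reproduces exactly the coefficient $e_{n,k}^{2m}$ from Theorem~\ref{Thm_KF_tCM} with $j=k$, which is the assertion.

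Once the geometric reduction in the first step is in place, all that follows is elementary, so I expect no genuine obstacle; the only delicate point is the bookkeeping of $\Gamma$- and $\omega$-factors required to land precisely on the closed form of $e_{n,j}^s$. If one wishes to avoid even that, one may note that, by the first two steps together with rotation invariance, the left-hand side must be of the form $\gamma_{n,k,s}\,Q^{s/2}\phi_n^r(P,\beta)$ with $\gamma_{n,k,s}$ independent of $P$, $\beta$ and $r$, and then identify $\gamma_{n,k,s}$ from one convenient special case (for $s=0$ this is the classical Crofton formula \eqref{Form_Croft_Classic} with $j=k$, giving $\gamma_{n,k,0}=\alpha_{n,k,k}=1=e_{n,k}^0$). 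Finally, the extension from polytopes to arbitrary convex bodies, if desired, follows from the continuity of both sides in the body argument.
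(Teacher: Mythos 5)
Your proposal is correct. Note, however, that the paper does not actually prove this statement: Theorem \ref{Thm_CF_tCM_j=k} is recalled in the appendix as an external ingredient, quoted from \cite[Theorem~1]{HugWeis16c}, so there is no internal proof to compare against; your argument is in fact the standard direct derivation of that cited result. All the key steps check out: for $\mu_k$-almost every $E$ meeting $\interior P$ the slice $P\cap E$ is a $k$-polytope whose only $k$-face is itself, with normal cone $E^{\perp}$, so the polytope representation of $\Lambda_k$ factorizes $\TenCM{k}{r}{s}{0}(P\cap E,\beta\cap E)$ into a spatial and a spherical integral (and if $\dim P<n$, or on the exceptional set of flats, the integrand vanishes identically, consistent with $\phi_n^r(P,\beta)=0$ in the former case); the decomposition of $\mu_k$ over the Grassmannian plus Fubini in $L\oplus L^{\perp}$ produces $r!\,\phi_n^r(P,\beta)$; the rotational average of $\cH^{n-k-1}\restriction (L^{\perp}\cap\Sn)$ is $(\omega_{n-k}/\omega_n)\,\cH^{n-1}\restriction\Sn$ by invariance and mass comparison; and the moment identity $\int_{\Sn}u^{2m}\,\cH^{n-1}(\intd u)=\tfrac{(2m)!}{(4\pi)^m m!}\omega_{n+2m}Q^m$ together with $\omega_{n-k}/\omega_{n-k+2m}=\pi^{-m}\Gamma(\tfrac{n-k}{2}+m)/\Gamma(\tfrac{n-k}{2})$ reproduces exactly $e_{n,k}^{s}$ (the subscript $j$ in the stated theorem is a typographical slip for $k$, which you correctly resolved). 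Your fallback observation, that invariance forces the answer to be $\gamma_{n,k,s}Q^{s/2}\phi_n^r(P,\beta)$ and only the constant needs to be pinned down, is also sound, though for $s>0$ one still needs the spherical moment computation (or some special body) to identify $\gamma_{n,k,s}$, so it saves little.
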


    \begin{Theorem} \label{Thm_CF_tCM_j<k}
      Let $K \in \cK^n$, $\beta \in \cB(\R^n)$ and $j, k, r, s \in \N_{0}$ with $j < k \leq n$. Then,
      \begin{align*}
        \int_{\AOp(n, k)} \TenCM{j}{r}{s}{0} (K \cap E, \beta \cap E) \, \mu_k( \intd E) = \sum_{m = 0}^{\lfloor \frac s 2 \rfloor} \sum_{i = 0}^{1} c_{n, j, n - k + j}^{s, i, m} \, Q^{m - i} \TenCM{n - k + j}{r}{s - 2m}{i} (K, \beta),
      \end{align*}
      where the coefficients $c_{n, j, n - k + j}^{s, i, m}$ are defined as in Theorem \ref{Thm_KF_tCM} and $c_{n, j, j}^{s, i, m} = \1\{ m = i = 0 \}$.
    \end{Theorem}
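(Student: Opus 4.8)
The plan is to derive this formula from the kinematic formula for tensorial curvature measures, Theorem~\ref{Thm_KF_tCM}, by realizing an affine $k$-flat as a limit of large flat convex bodies; this is the instance of the connection between Crofton and kinematic formulae alluded to above, available here precisely because we are dealing with measure-valued valuations. The case $k=n$ is trivial, since then $\AOp(n,n)=\{\R^n\}$, $K\cap\R^n=K$, and the convention $c_{n,j,j}^{s,i,m}=\1\{m=i=0\}$ turns the right-hand side into $\TenCM{j}{r}{s}{0}(K,\beta)$; so I assume $k<n$. No reduction to polytopes is needed, since Theorem~\ref{Thm_KF_tCM} holds for all convex bodies. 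Fix a $k$-dimensional linear subspace $L\subseteq\R^n$ and set $W_\lambda:=\lambda(B^n\cap L)\in\cK^n$ for $\lambda>0$. Because $K$ is bounded, for every $g\in\GOp_n$ and all sufficiently large $\lambda$ one has $K\cap gW_\lambda=K\cap E$ and $\beta\cap gW_\lambda=\beta\cap E$ on the support of $\Lambda_j(K\cap gW_\lambda,\cdot)$, where $E:=g\,\aff W_\lambda\in\AOp(n,k)$, hence $\TenCM{j}{r}{s}{0}(K\cap gW_\lambda,\beta\cap gW_\lambda)=\TenCM{j}{r}{s}{0}(K\cap E,\beta\cap E)$. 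Using the standard disintegration of the invariant measures (with the normalizations fixed as in \cite{SchnWeil08}) together with the uniform bound $|\TenCM{j}{r}{s}{0}(K\cap E,\cdot)|\le c(K,n,r,s)$ coming from monotonicity of the intrinsic volumes, one obtains the classical integral-geometric identity
\[
  \int_{\AOp(n,k)}\TenCM{j}{r}{s}{0}(K\cap E,\beta\cap E)\,\mu_k(\intd E)=\frac{1}{\kappa_k}\lim_{\lambda\to\infty}\lambda^{-k}\int_{\GOp_n}\TenCM{j}{r}{s}{0}(K\cap gW_\lambda,\beta\cap gW_\lambda)\,\mu(\intd g)
\]
(if one prefers to avoid the lower-dimensional body $W_\lambda$, one replaces it by the slab $\lambda(B^n\cap L)+\varepsilon(B^n\cap L^\perp)$ and lets $\varepsilon\to0$ afterwards).

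Next I would insert Theorem~\ref{Thm_KF_tCM} into the right-hand side with $K'=\beta'=W_\lambda$. As $W_\lambda$ is a $k$-dimensional ball of radius $\lambda$ and $k<n$, its curvature measures $\phi_i(W_\lambda,\cdot)$ have total mass $V_i(W_\lambda)=\binom{k}{i}\tfrac{\kappa_k}{\kappa_{k-i}}\lambda^i$ for $i\le k$, vanish for $i>k$, and in particular $\phi_n(W_\lambda,\cdot)=0$. In the sum of Theorem~\ref{Thm_KF_tCM}, whose running index I call $k'$, the factor $\phi_{n-k'+j}(W_\lambda,W_\lambda)$ carries $\lambda^{n-k'+j}$, which equals $\lambda^k$ exactly for $k'=n-k+j$, vanishes identically for $k'<n-k+j$, and has strictly lower order for $k'>n-k+j$; the summand containing $e_{n,j}^s$ carries $\lambda^j=o(\lambda^k)$ because $j<k$, and the summand containing $\phi_n(W_\lambda,\cdot)$ vanishes. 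Since $j<k\le n-1$ forces $j<n-k+j<n$, the surviving index $k'=n-k+j$ lies in the summation range $\{j+1,\dots,n-1\}$, and $\lambda^{-k}V_k(W_\lambda)=\kappa_k$. Dividing by $\lambda^k$, passing to the limit term by term (justified by the uniform bound above) and cancelling $\kappa_k$ against the $1/\kappa_k$ of the previous display yields
\[
  \int_{\AOp(n,k)}\TenCM{j}{r}{s}{0}(K\cap E,\beta\cap E)\,\mu_k(\intd E)=\sum_{m=0}^{\lfloor s/2\rfloor}\sum_{i=0}^{1}c_{n,j,n-k+j}^{s,i,m}\,Q^{m-i}\TenCM{n-k+j}{r}{s-2m}{i}(K,\beta),
\]
which is the assertion.

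The delicate step is the first display: pinning down the normalization constant as precisely $1/\kappa_k$ and making the term-by-term passage to the limit rigorous. The value of the constant is in fact forced by consistency with the classical scalar Crofton formula — the coefficients $c_{n,j,k'}^{s,i,m}$ contain the factor $\alpha_{n,j,k'}$, and $\alpha_{n,j,n-k+j}=\alpha_{n,j,k}$ by the symmetry $k'\leftrightarrow n-k'+j$ of $\alpha_{n,j,k'}$ — and it can equally be read off the normalization relating $\mu$ and $\mu_k$. An alternative route avoiding the limiting argument would reduce to a polytope $P$, split $\int_{\AOp(n,k)}\TenCM{j}{r}{s}{0}(P\cap E,\cdot)\,\mu_k(\intd E)$ into the contributions of the faces $F\in\cF_{n-k+j}(P)$ via $\dim(F\cap E)=\dim F+k-n$, and for each face separate a position integral (evaluated by the affine Blaschke--Petkantschin formula) from a normal-cone integral (using that $N(P\cap E,F\cap E)$ is the orthogonal projection of $N(P,F)$ onto $E^0$); there the terms with $i=1$ and the powers $Q^{m-i}$ would enter through the metric tensor of $E^0$ appearing in the projected normal cones, at the price of more involved combinatorial identities.
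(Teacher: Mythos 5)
Your argument is correct, but it is worth pointing out that the paper does not prove Theorem \ref{Thm_CF_tCM_j<k} at all: it is quoted in the appendix from \cite[Theorem~4]{HugWeis16c}, where (as far as the present paper's framework is concerned) it enters as an external input on the same footing as Theorem \ref{Thm_KF_tCM}. What you do instead is derive Theorem \ref{Thm_CF_tCM_j<k} \emph{from} Theorem \ref{Thm_KF_tCM} by the classical localization of the kinematic formula along a large $k$-dimensional ball $W_\lambda$ — precisely the ``connection of the Crofton formula and the kinematic formula'' that Section \ref{Sec_Proofs} says is available for measure-valued valuations (and unavailable for the global Minkowski tensors). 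Your bookkeeping is right: $\phi_n(W_\lambda,\cdot)=0$ since $\dim W_\lambda=k<n$, the $e_{n,j}^s$-term is of order $\lambda^j=o(\lambda^k)$, $V_{n-k'+j}(W_\lambda)$ vanishes for $k'<n-k+j$ and is of order $\lambda^{n-k'+j}=o(\lambda^k)$ for $k'>n-k+j$, the surviving index $n-k+j$ lies in $\{j+1,\dots,n-1\}$ because $j<k\le n-1$, and $V_k(W_\lambda)=\kappa_k\lambda^k$ cancels the normalization. The one place where your wording is looser than the argument needs to be is the claim that ``for every $g\in\GOp_n$ and all sufficiently large $\lambda$'' one has $K\cap gW_\lambda=K\cap E$: the threshold on $\lambda$ depends on $g$ and is not uniform, so the correct statement is that for each $\lambda$ the exceptional set of motions has $\mu$-measure $O(\lambda^{k-1})$ after the disintegration $\mu=\nu\otimes\cH^k\otimes\cH^{n-k}$; since you invoke exactly this disintegration together with the uniform bound on $\TenCM{j}{r}{s}{0}$, the gap is cosmetic. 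The trade-off between the two routes is clear: your derivation makes Theorem \ref{Thm_CF_tCM_j<k} a corollary of Theorem \ref{Thm_KF_tCM} and so reduces the external input of the paper to the kinematic formula alone, whereas the proof in \cite{HugWeis16c} (essentially the face-by-face Blaschke--Petkantschin computation you sketch as an ``alternative route'') is logically independent of the kinematic formula but requires the heavier rotational integral geometry of projected normal cones.
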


\end{document}